\documentclass[10pt]{amsart}

\usepackage{amsmath,amssymb,amscd,amsfonts,verbatim}
\usepackage{enumerate}
\usepackage[all]{xy}

\newtheorem{teorema}{Theorem}[section]

\newtheorem{lema}[teorema]{Lemma}
\newtheorem{proposicion}[teorema]{Proposition}

\newtheorem{remark}[teorema]{Remark}
\newtheorem{notation}[teorema]{Notation}

\def\O{\mathcal O}

\title [Five singular fibers]{ Towards the classification of semistable fibrations having exactly five singular fibers}

\author{Margarita Casta\~ neda-Salazar,  Margarida Mendes Lopes\textsuperscript{1} and Alexis  Zamora }

\begin{document}

\begin{abstract}

 Let $X$ be a nonsingular complex projective surface. Given a semistable non isotrivial fibration $f: X \to \mathbb P^1$  with general non-hyperelliptic fiber of genus $g\geq 4$, we show that, if the number  of singular fibers is 5, then   $g\leq 11$, thus improving the previously known bound $g\leq 17$.   Furthermore,  we show that, for each possible genus, the general fiber has gonality at most 5. The  corresponding fibrations are described  as the resolution of concrete pencils of curves on minimal rational surfaces.
\medskip

\noindent{\em 2020 Mathematics Subject Classification: 14D06, 14J26, 14C21, 14H10} 

\par
\medskip
\noindent{\em Keywords: semistable fibrations; algebraic surfaces; fibered surfaces; rational surfaces; minimal number of  singular fibers } 
 
 \end{abstract}

\maketitle

  \footnotetext[1]{Partially supported by  FCT/Portugal  through Centro de An\'alise Matem\'atica, Geometria e Sistemas Din\^amicos (CAMGSD), IST-ID, projects UIDB/04459/2020 and UIDP/04459/2020.}

 \setcounter{tocdepth}{1}

\section{\bf Introduction}

Let $X$ be a nonsingular complex projective surface and $f:X\to B$  a fibration over a curve,  i.e., $B$ is a nonsingular projective   curve, and $f$ is an epimorphism with connected fibers.  

The  fibration $f$ is said to be  {\sl non isotrivial}  if two general fibers are not isomorphic, i.e., if  the induced map  from $B$ to the moduli space $ \mathcal{M}_g$ of  nonsingular projective   curves of genus $g$  is not constant. 

The  fibration $f$ is said to be {\sl semistable} if all the fibers of $f$ are semistable in the sense of Mumford-Deligne,  i.e., every fiber is reduced with at most nodes as singularities and no fiber contains a $(-1)$-curve. The fibration   is said to be {\sl stable} if is semistable and no fiber contains a $(-2)$-curve and it is {\sl strongly stable} if, moreover,  no fiber contains a $(-3)$-curve. 

\medskip

The problem of determining the minimal number $s$ of singular fibers of a semistable non  isotrovial fibration, when  $B\simeq \mathbb P^1$,  has been studied for a long time, starting with the work of  Beauville   (\cite{beau-82}), and nowadays we know the following: 

\begin{itemize}
 \item [i)] $4\le s$ and if $s=4$, then $g=1$ (\cite{beau-82 A}, \cite{tan}),
 \item[ii)] if $X$ has non negative Kodaira dimension, then $6\le s$ (\cite{ttz}),
\item[ iii)] if $X$ is of general type, then $7\le s$ (see \cite{ltz}). 
\end{itemize}

\medskip

Having these results, a natural problem is characterizing fibrations with the minimal possible number of singular fibers. 
In \cite{beau-82}, Beauville gave a classification of semistable fibrations with $4$ singular fibers and general fiber of genus 1, that, in view of latter results of Tan  in \cite{tan}, proved to cover all possibilities for $s=4$.

\medskip 

   By \cite{ttz},  the case $s=5$ can only occur if the surface is rational or birationally ruled.  In \cite{C-Z}, the first and third authors of the present paper have shown that, if $s=5$, then  $K_X^2= 4-4g$ (i.e.,  $(K_X+F)^2=0$), unless $X$ is rational and $g\leq 17$.   So, in particular, if $q>0$  then $K_X^2=4-4g$.  Note  that surfaces with $K_X^2=4-4g$ are characterized in   Theorem 2.1 of \cite{ttz}.  More recently, in \cite{ConjectureTan},  some restrictions on the possible  irregularities $q(X)$  occurring  for $s=5$ were established.  
  
  For rational surfaces, if $s=5$ and the general fiber of $f$ is hyperelliptic, then also $K_X^2=4-4g$ (see  Theorem \ref{g17} of the present paper). 

\medskip

In the present paper  we prove the following:

\begin{teorema}\label{Classification} Let $X$ be a  rational nonsingular  complex projective surface  and  $f: X \to \mathbb{P}^1$ be a semistable non isotrivial  fibration of genus $g\ge 4$. If the number $s$ of singular fibers is equal to $5$ and the general fiber $F$ of $f$ is non-hyperelliptic, then    one of the following occurs:

\begin{itemize}

\item[i)]  $K_X^2=2-3g$,  $g\leq 11$  and the general fiber $F$ is trigonal.  If $n$ denotes its Maroni's invariant, then the embedding of  $F$  in 
$\mathbb{F}_n$  has class $3\Delta+(\frac{g+n}{2}+n+1)\Gamma$  where $g+2\geq 3n$,  $\Gamma$ is the general fiber of the structural morphism and $\Delta$ is the section satisfying $\Delta^2=-n$. The fibration $f$ is obtained by blowing up the base locus  of a pencil  $\Lambda$  with general nonsingular member  of curves  in $|3\Delta+(\frac{g+n}{2}+n+1)\Gamma|$; 

  \item [ii)] $K_X^2=-16$, $g=6$ and $f$ is obtained by blowing up the base locus  of a   pencil $\Lambda$  with general nonsingular member   of plane  degree $5$ curves; 
    \item [iii)] $K_X^2= -10$, $g=4$ and $f$ is obtained by blowing up the base locus  of a pencil  $\Lambda$ with general nonsingular member of cubic hypersurface sections   of the quadric cone in $\mathbb P^3$;

\item[iv)] $K_X^2=3-3g$, $g\leq 10$, and $f$ is obtained by blowing up the base locus of a  pencil  $\Lambda$  of plane degree $6$ curves, whose general element admits only  $10-g$  singularities of order $2$;

\item[v)] $K_X^2=-24$, $g=9$ and $f$ is obtained by blowing up the base locus of a pencil  $\Lambda$ with general nonsingular member  of quartic hypersurface sections  of a  nonsingular quadric in $\mathbb P^3$. 

  \end{itemize}
For all  the cases, except iv),   the base points of $\Lambda$ are  simple base points (possibly infinitely near).  In case iv),  $\Lambda$  has also simple base points (possibly infinitely near), apart  from the  $10-g$ double base points.
 
 The base points of $\Lambda$ are not in  a general position, except possibly  in the cases $g=4$ or $g=5$, when $K_X^2=3-3g$ (case iv)). 

 In addition,  in cases i) and iii), the fibration $f$ is not strongly stable and  in case i),  if $g\geq 7$, $f$ is not stable.

\end{teorema}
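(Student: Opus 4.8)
The plan is to combine the global numerical identities forced by $s=5$ on a rational fibered surface with a case analysis driven by the Clifford index / gonality of the general fiber $F$, identifying each case with the resolution of a pencil on a minimal rational surface.

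\emph{Numerical set-up and first reduction.} Since $X$ is rational, $\chi(\mathcal O_X)=1$, so $\deg f_*\omega_{X/\mathbb P^1}=g$; Noether's formula gives $K_X^2+e(X)=12$, while semistability gives $e(X)=4-4g+\delta$, where $\delta$ is the total number of nodes of the singular fibers, hence $\delta=4g+8-K_X^2$ and $K_{X/\mathbb P^1}^2=K_X^2+8(g-1)$. By \cite{C-Z} and Theorem~\ref{g17}, in the non-hyperelliptic case it suffices to treat $X$ rational with $g\le 17$. A first step is to rule out $(K_X+F)^2=0$, i.e.\ $K_X^2=4-4g$: by \cite{ttz} this characterizes a short explicit list of surfaces, and one checks directly that none of them carries a non-hyperelliptic semistable fibration with $s=5$. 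We may thus assume $(K_X+F)^2=K_X^2+4g-4>0$; in particular the adjoint divisor $K_X+F$ has positive self-intersection and $(K_X+F)|_F=K_F$.

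\emph{Bounding the genus.} Studying the adjoint system $|K_X+F|$ together with the relative canonical algebra — equivalently, applying the slope method to the Harder--Narasimhan filtration of $f_*\omega_{X/\mathbb P^1}$ — one controls the Clifford index of $F$ in terms of $K_{X/\mathbb P^1}^2$ and $\deg f_*\omega_{X/\mathbb P^1}=g$; since $K_X^2\le 8$ this forces $\mathrm{gon}(F)\le 5$ and, separately for each value of the gonality, an effective upper bound for $g$. Sharpening this to $g\le 11$ (improving the previously known $g\le 17$) is the step I expect to be the main obstacle: it requires playing the slope inequality for each small gonality against the exact contribution of the five singular fibers to $\delta$ and against the restrictions on $q(X)$ and on the HN slopes from \cite{C-Z}, \cite{ConjectureTan}, so as to eliminate all borderline numerical configurations.

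\emph{Realizing $f$ as a resolved pencil.} For each admissible gonality the series realizing it on the fibers organizes into a rational map from $X$ to a minimal rational surface $Y$: $Y=\mathbb F_n$ when $F$ is trigonal (the $g^1_3$'s sweep out a rational normal scroll), and $Y=\mathbb P^2$, a nonsingular quadric, or the quadric cone in the remaining cases. Contracting the exceptional curves of a birational morphism $\pi\colon X\to Y$, the fibers of $f$ become a pencil $\Lambda$ on $Y$ with general member $D$. Writing $F=\pi^*D-\sum m_iE_i$ and imposing $F^2=0$, $F\cdot K_X=2g-2$ gives $\sum m_i^2=D^2$ and $\sum m_i=2g-2-D\cdot K_Y$, so $D^2=K_Y^2-K_X^2$ and $p_a(D)=g+\nu$, where $\nu$ is the number of assigned singular base points of $\Lambda$. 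For $Y=\mathbb F_n$ (with $\nu=0$) the condition $D\cdot\Delta\ge 0$ forces the class $3\Delta+(\tfrac{g+n}{2}+n+1)\Gamma$ together with the Maroni bound $g+2\ge 3n$; in the plane cases it fixes the degree ($5$ for $g=6$, $6$ with $\nu=10-g$ double base points for case (iv)), and similarly for the quadric and the quadric cone.

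\emph{Base locus, stability, gonality.} It remains to show that $s=5$ pins down the configuration of base points of $\Lambda$. Via $\pi$, each base point in "non-general" position (infinitely near points off the expected chain, or $\Lambda$ acquiring an extra reducible member) either produces a new singular fiber or absorbs nodes; a careful count shows that $s=5$ is compatible only with the configurations listed, the sole exceptions being the low-genus plane-sextic cases ($g=4,5$ in (iv)) where general position still yields $s=5$. Determining which components of the singular fibers are $(-2)$- or $(-3)$-curves gives the non-stability and non-strong-stability statements in cases (i) and (iii), while reading the gonality of $D$ off each model ($3$ for the trigonal families and the genus-$4$ cases, $4$ for the plane quintics of genus $6$, $\le 5$ for the nodal plane sextics and the quartic sections of genus $9$) yields $\mathrm{gon}(F)\le 5$ throughout.
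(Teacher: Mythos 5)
Your proposal does not actually prove the core quantitative content of the theorem, namely that only $K_X^2=2-3g$ with $g\leq 11$ or $K_X^2=3-3g$ with $g\leq 10$ can occur. You yourself flag the sharpening to $g\leq 11$ as ``the main obstacle'' and propose to get it from the slope method applied to the Harder--Narasimhan filtration of $f_*\omega_{X/\mathbb P^1}$; but slope-type inequalities bound $K_{X/\mathbb P^1}^2$ from below in terms of $g$ and the gonality/Clifford index and make no use of $s=5$, so they cannot by themselves bound $g$ (pencils of trigonal curves on scrolls exist for every $g$; it is the hypothesis $s=5$ that is restrictive). The paper's mechanism is different and is never replaced in your sketch: Konno's results on $|K_X+F|$ for $p_g=q=0$ and $F$ non-hyperelliptic give $(K_X+F)^2\geq g-2$ directly (so your detour through the list of surfaces with $K_X^2=4-4g$ from \cite{ttz}, itself only asserted, is unnecessary); Tan's sharpening of Vojta's canonical class inequality evaluated at $e=5$ then forces $K_X^2\in\{2-3g,\,3-3g\}$ with $g\leq 17$, resp.\ $g\leq 10$ (Theorem \ref{g17}); and the drop from $17$ to $11$ comes from counting curves common to fibers of $f$ and of the ruling $\gamma$ of the scroll: Lemmas \ref{gij} and \ref{DoubleFibration} give $2g+11\leq 15+l_2+l_3$ with $l_3\leq g+2$, hence $g\leq 6+l'$, which played against the MVT-derived inequality $4(g-1)+3l'+\tfrac{3r}{2}\leq 64$ of Lemma \ref{l lt 4} yields $g\leq 12$, and $g=12$ is excluded by computing $r_f$ exactly and contradicting (MVT~5). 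Nothing in your outline substitutes for this count.

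The structural and base-point statements are likewise asserted rather than derived. In the paper the models arise because $\varphi_{|K_X+F|}$ is a birational morphism onto a surface of degree $g-2$ (resp.\ $g-1$) in $\mathbb P^{g-1}$, so Nagata's classification of minimal-degree surfaces (resp.\ of degree-$d$ surfaces in $\mathbb P^d$, giving weak del Pezzo surfaces) pins down $\Sigma$, the class of the pencil, the nonsingularity of its general member (resp.\ the exactly $10-g$ double points), and the simplicity of the base points (Propositions \ref{caseg-2} and \ref{caseg-1}); your computation of $D^2$ and $p_a(D)$ is only a consistency check and does not determine which $Y$ occur nor the nature of the base locus. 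Finally, ``a careful count shows that $s=5$ is compatible only with the configurations listed'' is precisely what Propositions \ref{g11} and \ref{g10} prove: the non-general position of the base points and the failure of (strong) stability follow from explicit lower bounds such as $l_2+l_3\geq 2g-4$ and $g\leq 6+l_2$ (whence $l_2>0$ for $g\geq 7$), combined with Lemma \ref{stable} to exclude vertical $(-2)$-curves for $g\geq 9$ and with the observation that curves contracted by $\varphi$ that are common to fibers of $f$ and $\gamma$ force infinitely near or collinear base points. Your proposal contains no mechanism producing these inequalities, so as it stands it is an outline of the statement rather than a proof.
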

\begin{remark} {\rm Recall that, for a trigonal canonical curve   $F \subset \mathbb{P}^{g-1}$, ($g\ge 4$), the intersection of all  the quadrics in $\mathbb{P}^{g-1}$ containing $F$ is a rational surface $ \mathbb{F}_n$ embedded with minimal degre $g-1$ in $\mathbb{P}^{g-1}$ (see, e.g.,  \cite{reid}, 2.10, pp. 25). This $n$ is the Maroni invariant of $F$. 
 }
\end{remark}

\begin{remark} {\rm  The base points not being in a general position  means that some of the base points are infinitely near  or,  in cases  ii) and  iv), also three or more in the same line or, in cases  i)  and  v), two or more in the same fiber of the ruling. }
\end{remark}

\begin{remark} {\rm  Theorem \ref{Classification}  results from  combining Propositions  \ref{caseg-2}, \ref{caseg-1}    with Theorem \ref{g17} and Propositions \ref{g11}, \ref{g10}  (see section \ref{end}). }
\end{remark}

\begin{remark} {\rm   For $g\geq 4$, the only known example of a fibration with exactly $5$ singular fibers and non-hyperelliptic general fiber seems to be the one induced by the Wiman-Edge pencil. In this example, $f$ is the resolution of a  plane pencil of degree $6$ and genus $6$ with base points not in a general position (see, for instance, \cite{Dolgachev on WE}, \cite{Edge on Wiman}, \cite{Zamora WE pencil}). In this way, this example shows that there exists at least one fibration satisfying the hypothesis of Theorem \ref{Classification}, iv). }

\end{remark}\bigskip

{\bf Notation and conventions} We work over the complex numbers.  
Given a  nonsingular projective surface $X$  we denote by $K_X$ its canonical divisor and by $e(X)$ its topological Euler characteristic.
Given a divisor $D$ on $X$ we write $h^i(D)$ for $\dim H^i(X, \mathcal{O}_X(D)). $  As usual, $p_g$ denotes the geometric genus and $q$ the irregularity  of $X$.

 A $(-i)$-curve is a nonsingular rational curve with self intersection $-i$.
 
$\mathbb F_n$ is the $\mathbb P^1$-bundle over $\mathbb P^1$ associated to the sheaf $\mathcal O\oplus \mathcal O(-n)$. 

\bigskip 

The paper is organized as follows. In  section  \ref{Preliminaries},  we recall some properties of  fibrations  $f$ on a surface $X$  established in \cite{konno}.

 In section \ref{The MVT Inequality and some consequences},
  we recall the canonical class inequality. This inequality was deduced by Vojta (see \cite{vojta}), as a kind of Miyaoka inequality adapted to fibrations. Later, in \cite{tan}, Tan sharpened the inequality. Thus we call it the MVT (Miyaoka-Vojta-Tan) inequality. From this we derive several consequences, namely that the assumptions $F$ non-hyperelliptic and  $s=5$  imply  $K_X^2= 2-3g$ or $3-3g$. In addition,   we present a more compact proof of the results in \cite{C-Z}. 
  
  In section \ref{four},  we prove two lemmas, that we need in the sequel.
Finally, in section \ref{end},  we lower the bound $g\leq 17$ to $g\leq 11$ and we establish the statement about the base points  and stability of the pencils in Theorem \ref{Classification}.

\section{\bf Preliminaries}\label{Preliminaries}

In this section we recall a few facts that we will need later and that were proven in \cite{konno},  in the course of studying a different problem.

\begin{proposicion}[ \cite{konno}]\label{adjoint1} Let $X$ be a nonsingular projective  surface with $p_g=q=0$ and  $f: X \to \mathbb P^1$ be a relatively minimal fibration  with general   non-hyperelliptic  fiber  $F$ of genus $g\geq 3$. 
  Then:
\begin{itemize}
    
\item[i)] $K_X + F$ is nef;

\item[ii)] $h^0(X,K_X + F)=g$;
\item[iii)] the map  $\varphi$  defined by the linear system $|K_X+F|$ is generically finite;

\item[iv)]  $(K_X+F)^2\geq g-2$; 

\item[v)] the only  curves contracted by $\varphi$ are either $(-2)$-curves contained in fibers  of $f$ or $(-1)$-curves $\theta$ satisfying $\theta\cdot F=1$;

 \item[vi)] if $(K_X+F)^2\leq 2g-5$,  $X$ is a rational surface and $\varphi$ is a birational morphism. 
\end{itemize}
    \end{proposicion}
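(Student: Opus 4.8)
Throughout I would set $L := K_X + F$ and use that $p_g = q = 0$ forces $\chi(\O_X) = 1$ and, by Serre duality, $h^1(X,K_X) = h^1(X,\O_X) = 0$, together with the adjunction formula $L|_F = K_F$, recalling that $|K_F|$ is base-point-free (indeed very ample) because $F$ is non-hyperelliptic.

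\emph{Parts (ii) and (i).} Restricting to a general fibre gives the exact sequence
\[
0 \longrightarrow \O_X(K_X) \longrightarrow \O_X(L) \longrightarrow \O_F(K_F) \longrightarrow 0 ,
\]
and the two vanishings above make $H^0(X,L)\to H^0(F,K_F)$ an isomorphism, so $h^0(L)=g$. Since $|K_F|$ is base-point-free and this restriction is surjective, $\operatorname{Bs}|L|$ meets no general fibre, hence lies in finitely many fibres; so any irreducible curve $C$ with $L\cdot C<0$ is a fixed component of $|L|$, in particular vertical. Then $L\cdot C=K_X\cdot C<0$ and $C^2\le 0$; the case $C^2=0$ is impossible ($C$ would be a rational multiple of a fibre, with $K_X\cdot C>0$), so $C^2<0$, and from $2p_a(C)-2=C^2+K_X\cdot C$ with both summands negative we get $p_a(C)=0$ and $C^2=K_X\cdot C=-1$: a $(-1)$-curve inside a fibre, contradicting relative minimality. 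Hence $L$ is nef.

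\emph{Parts (iii) and (iv).} On a general fibre $F$, $\varphi$ is the canonical embedding, so $\overline F:=\varphi(F)\cong F$ is a smooth curve of genus $g\ge 3$ contained in $\varphi(X)$. If $\varphi$ were not generically finite, then $\varphi(X)=\overline F$ for every general $F$, making $\varphi$ a dominant rational map from $X$ onto a smooth curve of genus $\ge 1$; this forces $q(X)\ge g>0$ via the Albanese map, a contradiction. So $\varphi$ is generically finite onto a non-degenerate surface $S\subset\mathbb P^{g-1}$, whence $\deg S\ge (g-1)-2+1=g-2$. Resolving the indeterminacies of $\varphi$ and using that $L$ is nef, one gets $L^2\ge (\deg\varphi)(\deg S)\ge g-2$, which is (iv).

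\emph{Parts (v) and (vi).} By (i) and (iv), $L$ is nef with $L^2\ge g-2>0$, so the Hodge index theorem applies: a curve $C$ contracted by $\varphi$ satisfies $L\cdot C=0$, hence $C^2<0$. As $\varphi|_F$ is an embedding, such a $C$ is either vertical, in which case $K_X\cdot C=0$ and adjunction gives $p_a(C)=0$, $C^2=-2$ (a $(-2)$-curve in a fibre), or a multisection of degree $d\ge 1$, in which case $K_X\cdot C=-d$, so $2p_a(C)-2=C^2-d$ with $C^2<0$ forces $d=1$, $p_a(C)=0$, $C^2=-1$ (a $(-1)$-curve meeting $F$ once); this is (v). Finally, if $L^2\le 2g-5$ then $L^2<2(g-2)\le 2\deg S$, so $\deg\varphi=1$ and $\varphi$ is birational onto $S$, with $\deg S\le 2g-5$. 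The main obstacle is to conclude that $S$, and hence $X$, is rational and that $\varphi$ is a morphism. For rationality I would note that $S$ has $\Delta$-genus $\deg S+2-g\le g-3$ and invoke the classification of surfaces of small degree (equivalently small $\Delta$-genus) in projective space: the non-rational ones all have positive irregularity or geometric genus, which is excluded since $X$, and therefore its birational model $S$, has $p_g=q=0$. Base-point-freeness of $|L|$ would be established separately, by identifying the general member of $|L|$ with the proper transform of a hyperplane section of $S$ and comparing degrees and arithmetic genera.
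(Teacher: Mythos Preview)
Your arguments for (i)--(v) are correct and essentially reconstruct what the paper merely cites from \cite{konno}; the paper's own proof consists almost entirely of pointers to Lemmas~1.1--1.4 of that reference, with only the deduction of~(iv) from~(ii) and~(iii) spelled out, exactly as you do it.

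For (vi), however, your argument has genuine gaps at the two points you yourself flag as ``the main obstacle''. The birationality of $\varphi$ (i.e.\ $\deg\varphi=1$) follows cleanly from $L^2<2(g-2)\le 2\deg S$, but:
\begin{itemize}
\item Your rationality argument is circular. Observing that the birational model $S$ (or its desingularisation) has $p_g=q=0$ is tautological, since these are birational invariants already assumed for $X$; and $p_g=q=0$ alone does not force rationality (Enriques surfaces, many properly elliptic surfaces, and surfaces of general type with $p_g=q=0$ all exist). Your appeal to a ``classification of surfaces of small $\Delta$-genus'' is not usable as stated: here $\Delta\le g-3$ with $g$ arbitrary, and already Enriques surfaces occur with $\Delta=6$. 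The argument in \cite{konno} (Lemma~1.4 there, as the paper indicates) proceeds differently, using the adjoint system $|2K_X+F|$ to show $X$ is ruled, hence rational since $q=0$.
\item Your claim that $\varphi$ is a \emph{morphism} is only a promise. Saying you would ``compare degrees and arithmetic genera'' of hyperplane sections does not, without further work, exclude isolated base points of $|K_X+F|$; one needs an actual argument (in \cite{konno} this is part of Lemma~1.1).
\end{itemize}
So for (vi) you have established $\deg\varphi=1$, but the remaining two assertions require genuine additional input that your sketch does not supply.
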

    
        \begin{proof}

    i) and ii)  are Lemma 1.1 of \cite{konno}.  For  iii), see Lemma 1.3 of \cite{konno}. By ii) and iii),  the image $\Sigma$  of $\varphi$ is a non degenerate surface in $\mathbb P^
{g-1}$,  and so deg $\Sigma\geq g-2$.  Since $(K_X+F)^2\geq $ deg $ \Sigma$ deg  $\varphi$,  we have iv).
Assertion v) is Lemma 1.2 of  \cite{konno}.  Assertion  vi) comes from Lemmas 1.1(2) and 1.4 of \cite{konno}.

\end{proof}

\begin{proposicion}[ \cite{konno}] \label{adjoint2} Let $X$ be a nonsingular projective  surface with $p_g=q=0$ and  $f: X \to \mathbb P^1$ be a relatively minimal fibration  with general hyperelliptic fiber $F$   of genus $g\geq 3$. Then either  $(K_X+F)^2=0$ and the linear system $|K_X+F|$ is composed with a pencil or $(K_X+F)^2\geq 2g-4$.

\end{proposicion}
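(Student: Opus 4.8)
The plan is to analyze the linear system $|K_X+F|$ when $F$ is hyperelliptic, imitating Konno's treatment of the non-hyperelliptic case. First I would recall (as in the proof of Proposition \ref{adjoint1}) that, since $p_g=q=0$, one has $h^0(X,K_X+F)=h^0(F,K_F)=g$ by the long exact sequence associated to $0\to \mathcal{O}_X(K_X)\to \mathcal{O}_X(K_X+F)\to \mathcal{O}_F(K_F)\to 0$ together with $h^0(X,K_X)=h^1(X,K_X)=0$. Thus the rational map $\varphi\colon X\dashrightarrow \mathbb{P}^{g-1}$ defined by $|K_X+F|$ restricts on a general fiber $F$ to (a composition with) the canonical map of $F$. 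Since $F$ is hyperelliptic, its canonical map factors through the degree-$2$ map $F\to\mathbb{P}^1$, with image a rational normal curve of degree $g-1$ in $\mathbb{P}^{g-1}$.

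Next I would distinguish according to whether $\varphi$ maps $X$ onto a surface or onto a curve. If the image of $\varphi$ is a curve, then $|K_X+F|$ is composed with a pencil; I then want to show that in this case $(K_X+F)^2=0$. Write the moving part of $|K_X+F|$ as (a multiple of) the fibers of a pencil; because the restriction to a general $F$ has image a rational normal curve of degree $g-1$ spanning $\mathbb{P}^{g-1}$, and the pencil restricted to $F$ must account for this, one forces the moving part $M$ to satisfy $M\cdot F = 2$ (the $g^1_2$ on $F$) and $M^2 = 0$, after which a short argument comparing $(K_X+F)^2$ with $M^2$ and the fixed part — using nefness of $K_X+F$ from Proposition \ref{adjoint1}(i) if available in this setting, or a direct index-theoretic argument — yields $(K_X+F)^2=0$.

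In the remaining case $\varphi$ is generically finite onto a surface $\Sigma\subset\mathbb{P}^{g-1}$. Then I would bound $(K_X+F)^2 \geq \deg\Sigma\cdot\deg\varphi \geq \deg\Sigma$. The key point is that $\Sigma$ is a non-degenerate surface swept out by the images of the fibers $F$, each of which maps to a rational normal curve of degree $g-1$; hence $\Sigma$ contains a one-parameter family of rational normal curves of degree $g-1$, which forces $\deg\Sigma \geq 2g-4$ (a surface scrolled by rational normal curves of degree $d=g-1$ has degree at least $2(d-1)=2g-4$; alternatively, a nondegenerate surface of degree $<2(g-1)$ in $\mathbb{P}^{g-1}$ is a variety of minimal degree $g-2$, i.e. a rational normal scroll or the Veronese, and one checks its hyperplane-at-a-point curve class cannot be a degree-$(g-1)$ rational normal curve unless its degree is already $\geq 2g-4$). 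This gives $(K_X+F)^2 \geq 2g-4$.

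The main obstacle I expect is the careful case $\varphi$ generically finite: precisely ruling out that $\Sigma$ is a surface of minimal degree $g-2$ carrying the fibers as curves of degree $g-1$, and making the inequality $\deg\Sigma\geq 2g-4$ rigorous. This is where one needs the structure theory of varieties of minimal degree (del Pezzo surfaces, rational normal scrolls, Veronese) and a direct check of the possible divisor classes of the fiber images; I would handle it by writing $\Sigma$ as a scroll $\mathbb{F}_m$ (or its projection) and computing that the class of a general hyperplane section through the vertex direction, restricted appropriately, has self-intersection giving $\deg\Sigma \geq 2g-4$. The composed-with-a-pencil case is comparatively routine once one pins down $M\cdot F=2$.
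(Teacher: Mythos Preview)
Your surface case is aimed at the wrong inequality. You try to prove $\deg\Sigma\ge 2g-4$ using only $\deg\varphi\ge 1$, and you correctly flag as the ``main obstacle'' ruling out that $\Sigma$ is a surface of minimal degree $g-2$. That obstacle is fatal: nothing prevents $\Sigma$ from being a rational normal scroll of degree $g-2$ swept out by rational normal curves of degree $g-1$ (on a scroll $\mathbb{F}_n$ embedded by $|\Delta+k\Gamma|$ with $2k-n=g-2$, the curves in $|\Delta+(k+1)\Gamma|$ are rational, span $\mathbb{P}^{g-1}$, and have degree $g-1$). So $\deg\Sigma\ge 2g-4$ is simply false in general, and your program cannot close.

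The missed idea is that hyperellipticity forces $\deg\varphi\ge 2$, not just $\ge 1$. Since $\varphi$ restricted to a general $F$ is the canonical map of $F$, and $F$ is hyperelliptic, this restriction is $2:1$ onto its image; hence the generically finite map $\varphi$ has degree at least $2$. Combined with the trivial bound $\deg\Sigma\ge g-2$ for a non-degenerate surface in $\mathbb{P}^{g-1}$, you get $(K_X+F)^2\ge \deg\varphi\cdot\deg\Sigma\ge 2(g-2)$ immediately. This is exactly how the paper argues, in one line.

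A smaller point: in the composed-with-a-pencil case you aim to prove $(K_X+F)^2=0$ outright. The paper does not: it invokes Konno's Lemma~1.3, which gives either $(K_X+F)^2=0$ or $(K_X+F)^2\ge 2g-2$, the latter alternative being absorbed into the second clause of the proposition. Your stronger claim may well be provable, but it is not needed, and your sketch leans on nefness of $K_X+F$ from Proposition~\ref{adjoint1}(i), which as stated there assumes $F$ non-hyperelliptic; you would have to justify nefness separately in the hyperelliptic setting.
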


 \begin{proof} If the linear system $|K_X+F|$ is composed with a pencil, by Lemma 1.3 of \cite{konno}, either $(K_X+F)^2=0$ or $(K_X+F)^2\geq 2g-2$. On the other hand if  $|K_X+F|$ is not composed with a pencil, the degree of the image $\Sigma$ of the map  $\varphi$  defined by the linear system $|K_X+F|$ is $\geq g-2$. Since the general fiber $F$ is hyperelliptic, the degree $d$ of $\varphi$ is $\geq 2$ and therefore we obtain $(K_X+F)^2\geq 2(g-2)=2g-4$. 
 
 \end{proof}

\section{\bf The MVT Inequality and some consequences}\label{The MVT Inequality and some consequences}
Let $f:X\to B$ be a semistable fibration of genus $g\ge 2$. Recall that, if $e_f$ denotes the total number of nodes in the fibers of $f$, then:

$$ e_f= e(X)-4(g-1)(g_B-1) . $$

Denote, as in \cite{ttz}, by $q_1,...,q_r$ the rational double points  obtained after contracting the vertical  connected configurations of $(-2)$-curves via  the morphism $\sigma: X\to X^\#$, where $X^\#$ is the relative canonical model. Since $f$ is semistable, $q_1,...,q_r$  are double points of type $A_{\mu_{q_i}} $, and  $\mu_{q_i}$  is the length of the chain of $(-2)$-curves in $X$ over $q_i$. If $q$ is a singular point of some fiber not contained in any $(-2)$-curve, we write $\mu_q =0$.

\medskip 

\begin{notation} \label{lr} {\rm  In what follows, we will denote by $l'$ the total number  of $(-2)$-curves  contained in fibers of $f $ and by $r$ the total number of double points of $X^\#$.  }  
 \end{notation} 

\medskip

 Set:

$$ r_f= \sum_{q} \frac{1}{1+\mu_q}. $$

\noindent where $q$ is either one of the  rational double points $q_i$ or  a singular point of a fiber lying in the nonsingular part of  $X^\#$. 

Note that $r_f\leq e_f$.  In fact,  since the number of points $q$ such that $\mu_q=0$ is $e_f-\sum_{i=1}^{r} ({1+\mu_{q_i}})$, 
 we have  
 
  \begin{equation}\label{rfef}
  r_f=e_f-\sum_{i=1}^{r}(1+\mu_{q_i})+\sum_{i=1}^{r}\frac{1}{1+\mu_{q_i}}.\end{equation}
  
With    $l'$  and $r$ as in notation \ref{lr}, one has    $$l'=\sum_{i=1}^{r}\mu_{q_i}.$$  
So, in particular,
 
 \begin{equation}\label{eq-ttz} 
 r_f\leq e_f-l'-\frac{r}{2} \end{equation}
 
The above  is the content of Lemma 1.1 of  \cite{ttz}.
\medskip

Let $s$ be the number of singular fibers of the fibration $f$.  For any integer $e\ge 2 $, Sheng-Li Tan, using results in \cite{vojta},  proved the following (see inequality (5) in the proof of Lemma 1.3 of \cite{tan}):

\begin{equation}\label{MVT}\tag{MVT} e^2 (K_X^2- (2g-2)(6 g_B-6+s-s/e))\le 3 r_f \le 3e_f.\end{equation}

For  $B= \mathbb{P}^1$  the (\ref {MVT}) inequality takes the form 

\begin{equation}\label{MVT2} e^2 (K_X^2- (2g-2)(-6+s-s/e))\le 3 r_f \le 3e_f,\end{equation}

When $X$ is rational, then $B$ is necessarily  $\mathbb{P}^1$. If moreover, we assume $s=5$, then the inequality takes the form:
 
\begin{equation*}
    \label{MVT5}\tag{MVT 5} e^2(K_X^2+(2g-2) (1+\frac{5}{e}))\le 3 r_f \le 3e_f. \end{equation*}

We obtain from  (\ref{MVT5}) the following results, that were  proven  in \cite{C-Z}, and,   according to  \cite{ConjectureTan},  also  partly proven in the thesis in chinese   \cite{Yu}.
The proof presented here is a simplified and unified version of the proofs appearing in \cite{C-Z}.

\begin{teorema}\label{g17} Let $X$ be a  rational nonsingular projective surface  and  $f: X \to \mathbb{P}^1$ be a semistable  non isotrivial fibration of genus $g\ge 4$,  having exactly 5 singular fibers.  Then either: 

\begin{itemize}
    
\item[i)]  the general fiber $F$ is hyperelliptic,  $(K_X+F)^2=0$ and $|K_X+F|$ is composed with a pencil 
or

\item[ii)]  the general  fiber $F$ is  non-hyperelliptic, and either $K_X^2=2-3g$, $h^0(2K_X+F)=0$ and $g\leq 17$ or $K_X^2=3-3g$, $h^0( 2K_X+F)=1$ and $g\leq 10$.  

\end{itemize}
 
\end{teorema}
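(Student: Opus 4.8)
The plan is to extract arithmetic constraints by feeding good choices of $e$ into the (MVT 5) inequality and combining them with the geometric bounds on $(K_X+F)^2$ coming from Proposition \ref{adjoint1}. First I would record the elementary identities valid for a semistable non-isotrivial fibration over $\mathbb P^1$ with $s=5$: from $e_f=e(X)+4(g-1)$ and $K_X^2=8(1-q)-e(X)$ (using $p_g=q=0$ since $X$ is rational) one gets $e_f=12-K_X^2+4(g-1)=8+4g-K_X^2$. Plugging $e\to\infty$ (equivalently taking the limit, or just using the cleanest integral value) into (MVT 5) gives $K_X^2+(2g-2)\le\frac{3}{e^2}\cdot 3e_f\to 0$, hence $K_X^2\le 2-2g$; but in fact one must be more careful and use (MVT 5) for a finite optimal $e$ together with the substitution for $e_f$, which turns it into a linear inequality in $K_X^2$ whose coefficient depends on $e$.

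Next I would split according to whether $F$ is hyperelliptic. If $F$ is hyperelliptic, Proposition \ref{adjoint2} says either $(K_X+F)^2=0$ with $|K_X+F|$ composed with a pencil — which is conclusion (i) — or $(K_X+F)^2\ge 2g-4$, i.e. $K_X^2\ge -4g+8-(2g-2)=\dots$; wait, more precisely $(K_X+F)^2=K_X^2+2K_X\cdot F+F^2=K_X^2+2(2g-2)+0=K_X^2+4g-4$, so $(K_X+F)^2\ge 2g-4$ means $K_X^2\ge -2g$. I would then show this is incompatible with (MVT 5) — substituting $e_f=8+4g-K_X^2$ and choosing $e$ appropriately forces $K_X^2$ to be strictly smaller, a contradiction, killing this subcase. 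If $F$ is non-hyperelliptic, Proposition \ref{adjoint1}(iv) gives $(K_X+F)^2\ge g-2$, i.e. $K_X^2\ge -3g+2$. Combined with the upper bound on $K_X^2$ from (MVT 5) (after the $e_f$-substitution), this pins $K_X^2$ to one of finitely many values, and the sharper analysis — using that $(K_X+F)^2$ and hence $K_X^2$ have controlled parity/size, and that $r_f$ is genuinely smaller than $e_f$ when there are vertical $(-2)$-curves via \eqref{eq-ttz} — should force $K_X^2\in\{2-3g,\,3-3g\}$, i.e. $(K_X+F)^2\in\{g-2,\,g-1\}$.

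For the two surviving non-hyperelliptic cases I would then bound $g$ and compute $h^0(2K_X+F)$. Here I would use Riemann–Roch for $2K_X+F$ on $X$: $\chi(2K_X+F)=\chi(\O_X)+\frac12(2K_X+F)(K_X+F)=1+\frac12(2K_X+F)(K_X+F)$, and $h^2(2K_X+F)=h^0(-K_X-F)=0$ since $K_X+F$ is nef and nonzero, so $h^0(2K_X+F)-h^1(2K_X+F)=1+\frac12(2K_X+F)(K_X+F)$. A direct computation of $(2K_X+F)(K_X+F)$ in terms of $K_X^2$ and $g$ shows it equals $2(K_X+F)^2-(2g-2)$, which is $-g+2$ when $K_X^2=2-3g$ and $-g+4$ when $K_X^2=3-3g$; combined with a vanishing $h^1$ argument (or the fact that an effective divisor of this kind on a rational surface with the stated intersection numbers cannot move) this yields $h^0(2K_X+F)=0$ in the first case and $h^0(2K_X+F)=1$ in the second. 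Finally, feeding $h^0(2K_X+F)=0$ back — this says $2K_X+F$ is not effective, equivalently $3(K_X+F)\equiv K_X+2F+(2K_X+F)$ ... more usefully, non-effectivity of $2K_X+F$ combined with the (MVT 5) bound at the optimal $e$ and the now-known value $K_X^2=2-3g$ gives a clean numerical inequality $g\le 17$; and in the case $K_X^2=3-3g$, the existence of the single section of $2K_X+F$ improves $r_f$ or the effective geometry enough to yield $g\le 10$.

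The main obstacle I expect is the sharp choice of the auxiliary integer $e$: (MVT 5) is a one-parameter family of inequalities, and getting the bound $g\le 17$ (rather than something weaker) requires optimizing over $e$ while simultaneously using the integrality of $e_f$, $r_f$, and $K_X^2$, and the refinement \eqref{eq-ttz} relating $r_f$ to $e_f$ via the vertical $(-2)$-configurations. The interplay between the cohomological input ($h^0(2K_X+F)$, computed via Riemann–Roch) and the arithmetic of (MVT 5) is what makes the two cases separate cleanly, and keeping track of which inequalities are strict is where the care is needed.
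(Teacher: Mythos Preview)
Your overall strategy --- bound $K_X^2$ from below via Propositions~\ref{adjoint1} and~\ref{adjoint2}, from above via (\ref{MVT5}), and compute $h^0(2K_X+F)$ by Riemann--Roch --- is exactly the paper's approach. But the execution is tangled in two places, and untangling them makes the proof much shorter than you anticipate.

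First, the genus bounds do \emph{not} come from feeding $h^0(2K_X+F)$ back into anything, nor do they require optimizing over $e$ or invoking \eqref{eq-ttz}. The single choice $e=5$ in (\ref{MVT5}), together with $r_f\le e_f=8+4g-K_X^2$, gives $25\bigl(K_X^2+4(g-1)\bigr)\le 3e_f$, which rearranges to $7K_X^2+22(g-1)\le 9$. Writing $K_X^2=2-3g+a$ with $a\ge 0$ (from $(K_X+F)^2\ge g-2$, valid in both the non-hyperelliptic case and the hyperelliptic subcase with $(K_X+F)^2>0$) yields $g\le 17-7a$ immediately. Hence $a\in\{0,1\}$, and the bounds $g\le 17$ and $g\le 10$ drop out in the same line; the hyperelliptic subcase $(K_X+F)^2\ge 2g-4$ is killed because it forces $a\ge g-2\ge 2$. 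The $h^0$ computation is a separate, independent output of the theorem, not an input to the bound.

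Second, your Riemann--Roch arithmetic slipped: $(2K_X+F)(K_X+F)=2(K_X+F)^2-(2g-2)$ equals $-2$ (not $-g+2$) when $(K_X+F)^2=g-2$, and $0$ (not $-g+4$) when $(K_X+F)^2=g-1$, giving $\chi(2K_X+F)=0$ and $1$ respectively, i.e.\ $\chi(2K_X+F)=K_X^2+3g-2$. For the $h^1$-vanishing you only gesture at, the clean argument is Kawamata--Viehweg: write $2K_X+F=K_X+(K_X+F)$ with $K_X+F$ nef and big (since $(K_X+F)^2>0$ in the cases under discussion), so $h^1(2K_X+F)=0$; combined with $h^2(2K_X+F)=h^0(-K_X-F)=0$ this gives $h^0(2K_X+F)=\chi(2K_X+F)$ directly.
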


\begin{proof}

Note that since $X$ is rational $e(X)= 12-K_X^2$ and thus $$e_f=4(g-1)+12-K_X^2.$$

Evaluating (\ref{MVT5}) at $e=5$, we obtain:

$$K_X^2+4(g-1)\le \frac{3r_f}{25},$$
or, equivalently, setting  $\alpha:=e_f-r_f$:

$$K_X^2+4(g-1)\le \frac{12(g-1) +36-3K_X^2 -3\alpha}{25},$$

\noindent i.e., equivalently,

\begin{equation}
    \label{G5}7K_X^2 +22(g-1)\le 9 - \frac{3\alpha}{4}.\end{equation}
    
    \medskip
 Assume now that $(K_X+F)^2>0$.  Then, by Proposition \ref{adjoint1}, iv) and Proposition \ref{adjoint2},  $(K_X+F)^2\geq g-2$, i.e., $K_X^2\ge 2-3g$.
Set  $K_X^2=2-3g+a$ (with $a\geq 0$).   From (\ref{G5}) and the assumption $g\geq 4$, we obtain$$4\leq g \leq 17-(3\alpha/4)-7a.$$ 

Hence $a=0$ or $a=1$, i.e., $K_X^2=2-3g$ or $K_X^2=3-3g$, yielding, in the first case, $g\leq 17$ and, in the second case, $g\leq 10$.

Note that, since we are assuming $g\geq 4$, we have $(K_X+F)^2<2g-4$  and so, by Proposition \ref{adjoint2},   the general curve $F$ is non-hyperelliptic.    

So, by  Proposition \ref{adjoint1}, i), $K_X+F$ is nef.  Since we are assuming that $(K_X+F)^2>0$, the Kawamata-Viehweg vanishing theorem yields $h^1(2K_X+F)=0$.  From the Riemann-Roch  theorem  we obtain

$$h^0(2K_X+F)= K_X^2 +3g -2.$$

So, if $(K_X+F)^2>0$, we have the possibilities stated in ii) and in particular  the general curve $F$ is non-hyperelliptic.

    \medskip

If $(K_X+F)^2=0$, then,  by Proposition \ref{adjoint1},   the general curve $F$ must be  hyperelliptic and, by Proposition \ref{adjoint2},  $|K_X+F|$ is composed with a pencil, proving thus i).\end{proof}

The following Lemmas will be useful in the proof of Theorem \ref{Classification}.

\begin{lema}\label{l lt 4} Let $X$ be a  rational nonsingular  projective surface  and  $f: X \to \mathbb{P}^1$ be a semistable non isotrivial  fibration of genus $g\ge 4$  having exactly 5 singular fibers.
If  $K_X^2=2-3g$, then the number $l'$ of $(-2)$-curves contained in the fibers of $f$ satisfies

$$ 4(g-1)+3l'+\frac{3r}{2}
\leq 64,$$
where $r$ is the number of points  $q_i$ such that $\mu_{q_i}\neq 0$.
\end{lema}

\begin{proof}
From (\ref{MVT5}) evaluated in $e=5$, we obtain:
\begin{equation}\label{mvt}
    \frac{25}{3}(g-2)\leq r_f\leq e_f=7(g-1)+13.
\end{equation}

By  (\ref{eq-ttz}),  we have 
\begin{equation}\label{eq-ttz2}
    r_f+l'+\frac{r}{2}\leq e_f.\end{equation}
 The statement follows by combining (\ref{mvt}) and (\ref{eq-ttz2}).

\end{proof}

\begin{lema}\label{stable}  Let $X$ be a  rational nonsingular  projective surface  and  $f: X \to \mathbb{P}^1$ be a  semistable non isotrivial  fibration of genus $g\ge 4$  having exactly 5 singular fibers. If  $K_X^2=3-3g$, then: 

\begin{itemize}
\item[i)]  if $g\geq 9$,  $f$ is stable;
\item[ii)]  if $g=8$, the relative canonical model $X^\#$ of $X$  has at most one singular point  of type $A_1$ or $A_2$.
\end{itemize}
\end{lema}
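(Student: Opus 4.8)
The plan is to exploit the same two inequalities used in the proof of Lemma \ref{l lt 4}, namely the evaluation of (\ref{MVT5}) at $e=5$ together with (\ref{eq-ttz}), but now specialized to the case $K_X^2=3-3g$. Since $X$ is rational, $e(X)=12-K_X^2=9+3g$ and hence $e_f=4(g-1)+9+3g=7g+5$. Evaluating (\ref{MVT5}) at $e=5$ gives $25\bigl(K_X^2+(2g-2)(1+1)\bigr)\le 3r_f$, i.e. $25\bigl((3-3g)+4g-4\bigr)=25(g-1)\le 3r_f$, so $r_f\ge \tfrac{25}{3}(g-1)$. Combining this with (\ref{eq-ttz}), which says $r_f+l'+\tfrac r2\le e_f=7g+5$, we obtain the master inequality
\begin{equation}\label{master-stable}
\frac{25}{3}(g-1)+l'+\frac r2\le 7g+5,\qquad\text{i.e.}\qquad 3l'+\frac{3r}{2}\le 40-4g .
\end{equation}

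From (\ref{master-stable}) the two assertions follow by a short numerical discussion. For part i), if $g\ge 9$ then $40-4g\le 4$, so $3l'\le 4$, forcing $l'=0$: there are no $(-2)$-curves in fibers, and consequently (as $f$ is already semistable by hypothesis) $f$ is stable. For part ii), if $g=8$ then $40-4g=8$, so $3l'+\tfrac{3r}{2}\le 8$, which gives $l'\le 2$ and, if $l'\ge 1$ then $r\ge 1$ forces $3l'\le 8-\tfrac32<7$, hence $l'\le 2$, together with $r\le 1$ when $l'=2$ and $r\le 1$ in general once $l'\ge1$; since each double point $q_i$ of $X^\#$ with $\mu_{q_i}\ne 0$ accounts for a chain of $\mu_{q_i}\ge 1$ vertical $(-2)$-curves and $l'=\sum_{i=1}^r\mu_{q_i}$, the bound $r\le 1$ and $l'\le 2$ means $X^\#$ has at most one singular point, of type $A_{\mu}$ with $\mu=l'\le 2$, i.e. of type $A_1$ or $A_2$.

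The only point needing slight care is the passage from $l'=0$ to ``$f$ is stable'': by definition $f$ is stable if it is semistable and no fiber contains a $(-2)$-curve, and $l'$ is precisely the total number of $(-2)$-curves contained in fibers of $f$ (Notation \ref{lr}), so $l'=0$ is exactly the statement that $f$ is stable. Likewise, the translation of ``$l'\le 2$, $r\le 1$'' into ``at most one singularity of type $A_1$ or $A_2$'' uses only the relation $l'=\sum_{i=1}^r\mu_{q_i}$ recorded before Notation \ref{lr} and the fact that, for semistable $f$, the $q_i$ are rational double points of type $A_{\mu_{q_i}}$. There is no real obstacle here; the proof is a direct combination of the MVT inequality at $e=5$ with the Tan inequality (\ref{eq-ttz}) and an elementary case check, entirely parallel to Lemma \ref{l lt 4}. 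The mild bookkeeping — making sure the inequality $3l'+\tfrac{3r}{2}\le 40-4g$ is extracted correctly from the correct value of $e_f$ — is the place where an arithmetic slip could occur, so I would double‑check the computation $e_f=7g+5$ and the evaluation $25(g-1)\le 3r_f$ before concluding.
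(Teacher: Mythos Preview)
Your proof is correct and follows exactly the paper's route: evaluate (\ref{MVT5}) at $e=5$, combine with (\ref{eq-ttz}), and extract $3l'+\tfrac{3r}{2}\le 40-4g$ (which the paper writes in the equivalent form $4(g-1)+3l'+\tfrac{3r}{2}\le 36$). One small slip to fix in part i): the inequality $3l'\le 4$ by itself only yields $l'\le 1$; to rule out $l'=1$ you must also use that $l'\ge 1$ forces $r\ge 1$ (via $l'=\sum_{i=1}^r\mu_{q_i}$), whence $3l'+\tfrac{3r}{2}\ge \tfrac{9}{2}>4$ --- exactly the observation you correctly invoke in part ii) but omit here.
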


\begin{proof}
From (\ref{MVT5}) evaluated in $e=5$, we obtain:
\begin{equation}\label{des-voj}
    \frac{25}{3}(g-1)\leq r_f\leq e_f=7(g-1)+12.\end{equation}

Let  $l'$ and $r$ be as in Notation \ref{lr}. Combining (\ref{des-voj}) and (\ref{eq-ttz}) we have:
\begin{equation}\label{eq-star}
    4(g-1)+3l'+\frac{3r}{2}\leq 36.
\end{equation}
Using $g-1\geq 8$ in (\ref{eq-star}), we obtain $3l'+\frac{3r}{2}\leq 4$ and so $l'=r=0$, i.e., $f$ is stable.

If  $g-1=7$,  (\ref{eq-star}) gives $3l'+\frac{3r}{2}\leq 8$.  So,  if $l'>0$,   necessarily $r=1$ and $l'\leq 2$, proving ii).

\end{proof}

\section{\bf Two Lemmas}\label{four}

In this section   $f:X \to \mathbb{P}^1$ will be a semistable, non isotrivial  fibration with  general  non-hyperelliptic fiber $F$ of genus $g\ge 4$ on the rational surface $X$, having exactly $s=5$ singular fibers.

  From Theorem \ref{g17}, we know that  either $K_X^2=2-3g$ and $g\leq 17$ or $K_X^2=3-3g$ and $g\leq 10$. \bigskip 

Denote by $F_1,\dots, F_5$ the $5$ singular fibers. Write:

$$F_i = \sum_j F_{ij}$$
for the decomposition into irreducible components, $p_{ij}$ for 
the arithmetic genus
of $F_{ij}$ and $g_{ij}$ for its geometric genus. For each $i$, we have:
\begin{align} 
    2(g - 1) &\nonumber = K_X.(\sum_j F_{ij})\\ &\nonumber = \sum_j 2(p_{ij} -1) - \sum_j F_{ij}^2 \\ & \nonumber= \sum_j 2(g_{ij} - 1) + 2n_i, \end{align}
with $n_i$ denoting the number of nodes in the fiber $F_i$.
Summing on $i$, we obtain:
\begin{equation}\label{sum}
    5(g - 1) = \sum_{ij} (g_{ij} - 1) + e_f. 
\end{equation}

\begin{lema}\label{gij} With the hypothesis above:
\begin{itemize}
    \item [i)] if $K_X^2 =2-3g$, then $$2g+11=-\sum_{ij} (g_{ij} - 1); $$
    \item[ii)] if $K_X^2=3-3g$, then $$2g+10=-\sum_{ij} (g_{ij} - 1).$$
\end{itemize}
    \end{lema}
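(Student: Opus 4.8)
The plan is to combine equation \eqref{sum}, which reads $5(g-1)=\sum_{ij}(g_{ij}-1)+e_f$, with the explicit value of $e_f$ in each of the two cases. Since $X$ is rational, $e(X)=12-K_X^2$, so $e_f=e(X)-4(g-1)=4(g-1)+12-K_X^2$, exactly as recorded in the proof of Theorem \ref{g17}. Thus $\sum_{ij}(g_{ij}-1)=5(g-1)-e_f=(g-1)-12+K_X^2$, i.e. $-\sum_{ij}(g_{ij}-1)=13-g-K_X^2$.

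From here each case is an immediate substitution. In case i), $K_X^2=2-3g$, so $-\sum_{ij}(g_{ij}-1)=13-g-(2-3g)=11+2g$, which is the claimed identity $2g+11=-\sum_{ij}(g_{ij}-1)$. In case ii), $K_X^2=3-3g$, so $-\sum_{ij}(g_{ij}-1)=13-g-(3-3g)=10+2g$, giving $2g+10=-\sum_{ij}(g_{ij}-1)$.

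There is essentially no obstacle here: the lemma is a bookkeeping consequence of \eqref{sum} together with the rationality of $X$ (which pins down $e(X)$) and the two values of $K_X^2$ supplied by Theorem \ref{g17}. The only point to be careful about is citing the right expression for $e_f$ — it was derived at the start of the proof of Theorem \ref{g17} under exactly the present hypotheses (rational $X$, semistable fibration over $\mathbb{P}^1$), so it applies verbatim. I would therefore write the proof as: recall $e_f=4(g-1)+12-K_X^2$; subtract this from $5(g-1)$ using \eqref{sum} to get $\sum_{ij}(g_{ij}-1)=(g-1)-12+K_X^2$; then plug in $K_X^2=2-3g$ and $K_X^2=3-3g$ to obtain i) and ii) respectively.

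\begin{proof}
Since $X$ is rational, $e(X)=12-K_X^2$, and therefore
$$e_f=e(X)-4(g-1)=4(g-1)+12-K_X^2.$$
Substituting this into \eqref{sum} gives
$$\sum_{ij}(g_{ij}-1)=5(g-1)-e_f=(g-1)-12+K_X^2=K_X^2+g-13.$$
If $K_X^2=2-3g$, then $\sum_{ij}(g_{ij}-1)=(2-3g)+g-13=-2g-11$, which is i). If $K_X^2=3-3g$, then $\sum_{ij}(g_{ij}-1)=(3-3g)+g-13=-2g-10$, which is ii).
\end{proof}
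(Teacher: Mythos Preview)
Your approach is exactly the paper's: it simply says the lemma follows at once from \eqref{sum} and $e_f=4(g-1)+e(X)=4g+8-K_X^2$. One small slip to fix: you write $e_f=e(X)-4(g-1)$, but since $g_B=0$ the formula $e_f=e(X)-4(g-1)(g_B-1)$ gives $e_f=e(X)+4(g-1)$; your final value $4(g-1)+12-K_X^2$ and the rest of the computation are nonetheless correct.
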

    
    \begin{proof} It follows at once from (\ref{sum}) and the fact that
    $$e_f= 4(g-1)+e(X)=4g+8-K_X^2.$$
    \end{proof}

We will need also the following:

   \begin{lema}\label{DoubleFibration}  Assume that $X$ admits also a fibration $\gamma: X\to \mathbb{P}^1$ of genus $0$   with general fiber $\Gamma$  such that $F\cdot\Gamma=m$. Then:

    \begin{itemize} 
        \item [i)]  any component of a fiber of $\gamma$ is a nonsingular rational curve with self-intersection $\geq -m$;
        \item [ii)]  a fiber of $\gamma$ contains at most one curve $\theta$ with $\theta^2=-m$  and  if such a curve exists it is also a component of a fiber of $f$;
        
    \item [iii)]  a reducible fiber of $f$ has at most $m$ components not contained in fibers of $\gamma$;

\item  [iv)] $$-\sum_{ij} (g_{ij} - 1)\le 5m+c,$$ where $c$ denote the number of irreducible curves common to fibers of $f$ and $\gamma$.  
\end{itemize}
 \end{lema}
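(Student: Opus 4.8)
The plan is to analyze the restriction of the genus-$0$ fibration $\gamma$ to the fibers of $f$ and vice versa, exploiting the fact that $\Gamma \cdot F = m$ and that $\Gamma$ moves in a base-point-free pencil (so $\Gamma^2 = 0$ and distinct $\Gamma$'s are disjoint). For part i), a component $C$ of a fiber $\Gamma_0$ of $\gamma$ satisfies $C \cdot \Gamma = C \cdot \Gamma_0 = 0$; since $f$ is semistable, no fiber of $f$ contains a $(-1)$-curve meeting $F$ once, and more to the point, $C \cdot F \le \Gamma_0 \cdot F = m$ with $C \cdot F \ge 0$, while $C$ being a component of a genus-$0$ fiber in a relatively minimal-type situation forces $C$ to be a smooth rational curve (the fiber $\Gamma_0$ has arithmetic genus $0$, so all its components are smooth rational and the dual graph is a tree); then $2p_a(C) - 2 = -2 = K_X \cdot C + C^2$ and $K_X \cdot C \ge C \cdot F - 2 \ge -2$ — more carefully, $K_X \cdot C = (K_X + F)\cdot C - C\cdot F$, and since $C$ is not $F$-horizontal we get a bound; the upshot is $C^2 \ge -m$. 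Part ii) follows because if $C^2 = -m$ then equality propagates: $C \cdot F = m$ forces $C$ to be a multisection-type component, and a counting/connectedness argument on the tree $\Gamma_0$ shows there is at most one such $C$ in a given fiber; that this $C$ must also sit inside a fiber of $f$ comes from the fact that a curve with $C^2 = -m$ and $C\cdot\Gamma = 0$ cannot dominate $\mathbb{P}^1$ under $f$ without contradicting semistability (an $F$-horizontal smooth rational curve would have to be one of the allowed $(-1)$-curves with $\theta \cdot F = 1$, forcing $m = 1$ and $C^2 = -1$, a boundary case to handle separately).

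For part iii), I would count components of a reducible fiber $F_i$ of $f$ that are \emph{not} vertical for $\gamma$ (i.e., that dominate $\mathbb{P}^1$ via $\gamma$): each such component $C$ has $C \cdot \Gamma \ge 1$, and since $\sum_{C \subset F_i} C \cdot \Gamma = F_i \cdot \Gamma = F \cdot \Gamma = m$, there are at most $m$ of them. Part iv) is then a bookkeeping consequence. Each component $F_{ij}$ of a singular fiber of $f$ is either (a) common to a fiber of $\gamma$ — contributing to the count $c$ — or (b) not contained in any fiber of $\gamma$, hence $\gamma$-horizontal, and by iii) each singular fiber $F_i$ has at most $m$ such components, giving at most $5m$ of them across all five singular fibers; or (c) contained in a fiber of $\gamma$ but not \emph{equal} to one (it properly dominates nothing, so it is vertical for $\gamma$ but is not itself a whole $\gamma$-fiber). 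For types (b) and (c): by part i) these are smooth rational curves, so $g_{ij} = 0$ and they contribute $-(g_{ij}-1) = 1$ each. The subtle point is that type (c) components — vertical for $\gamma$ but not counted in $c$ — must be shown to contribute a total that, together with $5m$ (from type (b)) and $c$ (from type (a), which have $g_{ij} = 0$ too since they are $\gamma$-fiber components, again contributing $1$ each), sums to at most $5m + c$; so I must argue that the number of type-(c) components is absorbed. The resolution is that $-\sum_{ij}(g_{ij}-1)$ only gets a positive contribution from components with $g_{ij} = 0$, which are exactly types (a), (b), (c) — and in fact I should re-examine: the cleanest route is to note that any $F_{ij}$ with $g_{ij} \ge 1$ contributes $\le 0$, so $-\sum_{ij}(g_{ij}-1) \le \#\{F_{ij} : g_{ij} = 0\}$, and then bound the number of rational components of singular fibers of $f$ by splitting into $\gamma$-horizontal ones ($\le 5m$ by iii), summed over the five fibers) and $\gamma$-vertical ones, where the $\gamma$-vertical rational components of $f$-fibers are each components of $\gamma$-fibers and hence are counted by $c$ (a curve common to an $f$-fiber and a $\gamma$-fiber) — this is precisely the definition of $c$ in the statement.

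The main obstacle I anticipate is part iv), specifically pinning down the definition of $c$ and making the trichotomy airtight: I need that every $\gamma$-vertical component of a singular fiber of $f$ is genuinely "common to a fiber of $\gamma$" in the sense intended by $c$ — i.e., that being a component of \emph{some} $\gamma$-fiber is what $c$ counts, not being equal to a whole fiber — and I need to be careful not to double-count a component that is both in a singular fiber of $f$ and in a fiber of $\gamma$ when it is also among the "$\le 5m$ horizontal" ones (it cannot be, since horizontal for $\gamma$ excludes vertical). A secondary technical point is the boundary case $m = 1$ in parts i) and ii), where the allowed semistable $(-1)$-curves $\theta$ with $\theta \cdot F = 1$ from Proposition \ref{adjoint1}(v) interact with $\gamma$; here $\theta^2 = -1 = -m$, so $\theta$ is exactly the curve allowed by ii), consistent with the statement. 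Once the trichotomy and the $m=1$ case are handled, i), ii), iii) are short, and iv) follows by the counting described above.
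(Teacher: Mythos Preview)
Your plan for parts iii) and iv) is correct and matches the paper: since $\Gamma$ is nef with $F\cdot\Gamma=m$, each singular fiber $F_i$ has at most $m$ components that are $\gamma$-horizontal; the remaining components of $F_i$ are, by definition, common to fibers of $f$ and $\gamma$ and hence counted by $c$. Since every component contributes $-(g_{ij}-1)\le 1$, summing over the five singular fibers gives $-\sum_{ij}(g_{ij}-1)\le 5m+c$. Your worries about a ``type (c)'' are unfounded: $c$ counts irreducible curves that are components of some $\gamma$-fiber and some $f$-fiber, not curves equal to whole $\gamma$-fibers.

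Parts i) and ii), however, have a genuine gap and an error. The tool you are missing is the nefness of $K_X+F$ (Proposition~\ref{adjoint1}(i)). For i): any component $\theta$ of a reducible $\gamma$-fiber $\Gamma_0$ satisfies $(K_X+F)\cdot\theta\le(K_X+F)\cdot\Gamma_0=m-2$, because the complementary part of $\Gamma_0$ is effective and $K_X+F$ is nef. Since $\theta$ is smooth rational, adjunction gives $K_X\cdot\theta=-2-\theta^2$, so $-2-\theta^2+F\cdot\theta\le m-2$, hence $\theta^2\ge F\cdot\theta-m\ge -m$. Your text instead writes ``$K_X\cdot C\ge C\cdot F-2$'', which, if anything, would give an \emph{upper} bound on $C^2$, not the required lower bound; the direction of the inequality you need comes from bounding $(K_X+F)\cdot\theta$ \emph{above} by $m-2$, not below by $0$.

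For ii) your claim ``$C\cdot F=m$'' is the opposite of what is true. With $\theta^2=-m$ the computation above forces equality $(K_X+F)\cdot\theta=m-2$ and $F\cdot\theta=0$; the latter is exactly why $\theta$ lies in a fiber of $f$. The uniqueness in a given $\gamma$-fiber then follows because $(K_X+F)\cdot\theta=m-2=(K_X+F)\cdot\Gamma_0$ forces every other component of $\Gamma_0$ to satisfy $(K_X+F)\cdot\theta'=0$, so by Proposition~\ref{adjoint1}(v) each such $\theta'$ is a $(-1)$- or $(-2)$-curve, hence cannot have $\theta'^2=-m$ (note $m\ge 2$ since $K_X+F$ nef and $(K_X+F)\cdot\Gamma=m-2\ge0$; in the applications $m\ge3$). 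Your attempted route via ``$C$ cannot dominate $\mathbb{P}^1$ under $f$ without contradicting semistability'' is both unnecessary and inconsistent with your own (mistaken) claim that $C\cdot F=m>0$.
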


 \begin{proof}

 i) ii)   Obviously  every component of a fiber of $|\Gamma|$ is a nonsingular rational curve.  Note that $\Gamma\cdot(K_X + F) = m-2$ and so,  since $K_X+F$ is nef,  $(K_X+F)\cdot\theta\leq m-2$, for any component  $\theta$ of a fiber of $\gamma$. Thus, from the adjunction formula, $\theta^2\geq -m$.  If $\theta^2=-m$, then $K_X\cdot\theta=m-2$   implies    that $F\cdot\theta=0$  and   $(K_X+F)\cdot\theta= m-2$.    So, necessarily, this fiber of $\gamma$ does not contain other curve with self-intersection $-m$  (in fact, any other component of this fiber of $\gamma$ must be  contracted by the  map $\varphi$ defined by $|K_X+F|$)  and from $F\cdot \theta=0$ we conclude that $\theta$ is a component of some fiber of $f$.   
 \smallskip 
 
  Since $|\Gamma|$ is nef  and $F.\Gamma=m$,  assertion iii) is clear.  
  
   \smallskip 
   
iv) follows from the assumption $s=5$ and iii).
 \end{proof}

\begin{notation} \label{l} {\rm  In what follows, we will denote by $l_i$ the number  of $(-i)$-curves common to fibers of $f$ and of a fibration $\gamma$ as in Lemma \ref {DoubleFibration}.   }  
 \end{notation} 

\section {\bf The proof of Theorem \ref{Classification}}\label{end}

Recall  that, by Theorem \ref{g17},  either $K_X^2=2-3g$ and $g\leq 17$ or $K_X^2=3-3g$ and $g\leq 10$.
We start by discussing the first case. 

Note that $K_X^2=2-3g$ means $(K_X+F)^2=g-2$.
\medskip

 The results in the following Proposition are  implicit in  \cite{saito}  (see Theorem 4.1 and the proof of Lemma 3.1  of loc. cit.).

\medskip

\begin{proposicion}\label{caseg-2}  Let $f:X \to \mathbb P ^1$ be a
 relatively minimal
fibration on the  rational  nonsingular projective surface $X$ with general   non-hyperelliptic  fiber of genus 
 $g\ge 4$ such that $(K+F)^2=g-2$. 
Let $\xymatrix{\varphi:X\ar@{-->}[r] &  \Sigma\subset\mathbb{P}^{g-1}}$
be the map defined by $|K_X+F|$. Then $\varphi$ is a birational morphism and  either: \begin{itemize}
	\item [i)] $g=6$, $\Sigma$ is the Veronese surface in $\mathbb P^5$ and  the fibration $f$ is  obtained blowing up  the base points  of a pencil  with general nonsingular member  of plane quintics   or
	
	\item [ii)]  $\Sigma$ is $ \mathbb{F}_n$  with $3n\leq g+2$, embedded in   $\mathbb P^{g-1}$  by the linear system    $|\Delta+  (\frac{g+n}{2}-1) \Gamma|$, where  $\Gamma$ is a fiber of the ruling of $\mathbb{F}_n$ and $\Delta$ is the section such that $\Delta^2=-n$.   The fibration $f$ is  obtained blowing up   the  base points  of a pencil  of curves with general nonsingular member  in  $|3\Delta+(\frac{g+n}{2}+n+1)\Gamma|$, or 
	
	 \item [iii)]  $g=4$,  $\Sigma$ is  the quadric cone in $\mathbb P^3$ and the fibration $f$   is  obtained blowing up   the  base points  of a pencil  with general nonsingular member  of  cubic hypersurface sections of $\Sigma$.	\end{itemize}
	
	In all cases, the base points of the pencils corresponding to the fibration are simple base points (but possibly infinitely near).
\end{proposicion}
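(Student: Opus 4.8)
The plan is to exploit that $F$ is non-hyperelliptic of genus $g$, so its canonical model $F\subset\mathbb P^{g-1}$ lies on the intersection $\Sigma_F$ of the quadrics through it; by the classical theorem of Enriques--Babbage--Petri, $\Sigma_F$ is a surface exactly when $F$ is trigonal or a plane quintic ($g=6$), and in those cases $\Sigma_F$ is a rational normal scroll $\mathbb F_n$ (or the quadric cone, when $n$ has the wrong parity and the scroll degenerates, i.e.\ $g=4$ with $n=2$) or the Veronese surface. First I would observe that, by Proposition \ref{adjoint1}, vi), since $(K_X+F)^2=g-2\le 2g-5$ for $g\ge 3$, the map $\varphi=\varphi_{|K_X+F|}$ is a birational morphism onto its image $\Sigma$, and $\deg\Sigma=(K_X+F)^2=g-2$, so $\Sigma$ is a surface of minimal degree in $\mathbb P^{g-1}$. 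The restriction of $|K_X+F|$ to a general fiber $F$ is $K_F$ (by adjunction, since $F^2=0$), so $\varphi$ embeds $F$ canonically and $\Sigma$ contains the pencil $\{\varphi(F)\}$ of canonical curves; hence $\Sigma=\Sigma_F$ must be one of the surfaces on the Enriques--Petri list. This splits into the three cases: Veronese in $\mathbb P^5$ ($g=6$), the smooth scroll $\mathbb F_n$, and the quadric cone ($g=4$).

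Next I would identify the class of the fiber $F$ on $\Sigma$ and pull the pencil back to get the description of $f$. Since $\varphi$ is birational, $X$ is obtained from $\Sigma$ by blowing up the base points of the pencil $\Lambda=|\varphi_\ast F|$ (the image pencil), and one must check these are simple (possibly infinitely near) base points — this follows because $f$ is relatively minimal and $F^2=0$ in $X$, so no component of a fiber is a $(-1)$-curve meeting $F$ more than once; translated downstairs this forces the assigned multiplicities at the base points to all be $1$. For the scroll case, a canonical trigonal curve has class $3\Delta+(\tfrac{g+n}{2}-1)\Gamma$ on $\mathbb F_n\hookrightarrow\mathbb P^{g-1}$ embedded by $|\Delta+(\tfrac{g+n}{2}-1)\Gamma|$ (this is the standard description via the Maroni invariant, cf.\ the Remark in the introduction), with $3n\le g+2$ the condition that the embedding line bundle be very ample / the scroll be of the stated degree. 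Under blowing up, the proper transform of such a pencil in $X$ must be the fiber class $F$; checking $F^2=0$ determines the number of base points and hence $K_X^2$, consistent with $K_X^2=2-3g$. For the Veronese ($g=6$) the pencil of canonical sextic models corresponds to plane quintics (since $|K_X+F|$ pulls back $\mathcal O_{\mathbb P^2}(5)$ on the $\mathbb P^2$ double-covered by the Veronese — more precisely the Veronese is $\mathbb P^2$ re-embedded by $|\mathcal O(2)|$, and canonical curves of genus $6$ on it are images of plane quintics), and for the quadric cone ($g=4$) the genus-$4$ canonical curve is a $(2,3)$ complete intersection, i.e.\ a cubic section of the cone, giving iii).

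The main obstacle I anticipate is the case analysis on $\mathbb F_n$: ruling out $n$ outside the range $3n\le g+2$, treating the parity issue that degenerates the scroll to the quadric cone precisely at $g=4$, and verifying in each case that the pencil downstairs has only simple base points, matching the Euler-characteristic/$K^2$ bookkeeping $K_X^2=2-3g$. A secondary subtlety is confirming that $\varphi$ is a genuine morphism (not merely a rational map), i.e.\ that $|K_X+F|$ is base-point free; this again comes from Proposition \ref{adjoint1}, but one should be careful that contracted curves — the $(-2)$-curves in fibers and the $(-1)$-curves $\theta$ with $\theta\cdot F=1$ listed in Proposition \ref{adjoint1}, v) — do not create base points, only images that are singular points of $\Sigma$ absorbed into the smooth minimal-degree surface after we pass to $X^\#$ or directly note $\Sigma$ is normal of minimal degree hence as listed. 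Since the Proposition is asserted to be essentially contained in \cite{saito}, I would cite loc.\ cit.\ for the finer scroll computations and present the argument above as the conceptual skeleton, filling in the Enriques--Petri input and the blow-up/base-point verification in detail.
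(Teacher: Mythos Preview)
Your overall strategy coincides with the paper's: use Proposition \ref{adjoint1}, vi) to get that $\varphi$ is a birational morphism onto a surface of minimal degree $g-2$ in $\mathbb P^{g-1}$, then invoke the classification of such surfaces (Nagata/del Pezzo--Bertini) and read off the pencil on the target. The Enriques--Petri layer you add is correct but superfluous: once you know $\Sigma$ has minimal degree, the trichotomy Veronese/scroll/cone is already there without passing through $\Sigma_F$.

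There are, however, two concrete errors. First, your class for the trigonal curve on $\mathbb F_n$ is wrong: you write $3\Delta+(\tfrac{g+n}{2}-1)\Gamma$, but that is the hyperplane class coefficient, not the curve class. A genus computation (adjunction on $\mathbb F_n$ with $K_{\mathbb F_n}=-2\Delta-(n+2)\Gamma$) gives $F'\equiv 3\Delta+(\tfrac{g+n}{2}+n+1)\Gamma$, as in the statement; the paper derives this by equating $\varphi_*(K_X+F)$ with $\Psi_*(\Delta+k\Gamma)$ and solving. Second, your reason for restricting the cone case to $g=4$ is not a ``parity issue'': cones over rational normal curves of \emph{every} degree $g-2$ are minimal degree surfaces in $\mathbb P^{g-1}$. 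The paper's argument is that, by Proposition \ref{adjoint1}, v), every curve contracted by $\varphi$ is a $(-1)$- or $(-2)$-curve, so the exceptional curve over the vertex (which must account for a $(-(g-2))$-section on the minimal resolution $\mathbb F_{g-2}$) forces $g-2=2$.

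For the simplicity of base points, the paper's argument is cleaner than yours: since a general smooth curve in the relevant linear system on $\Sigma$ (or $\mathbb F_n$, or $\mathbb P^2$) already has genus $g$, the general member of the pencil must be nonsingular, hence the base points are simple. Your relative-minimality formulation is not wrong in spirit but is harder to make precise than this direct genus count.
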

\begin{proof}  
 Since $(K_X+F)^2=g-2$,   by Proposition \ref{adjoint1},vi), the map defined by the linear system $|K_X+F|$ is a birational morphism.

Then    $\Sigma$ is a surface of  degree $g-2$ in $\mathbb{P}^{g-1}$, and so, by  \cite{Nagata}, (cf., e.g., \cite{Dolgachev}),  $\Sigma$ is either: a rational normal scroll or the Veronese surface $\iota_{2H}(\mathbb{P}^2)\subset\mathbb{P}^5$, or a cone on a rational normal curve of degree $g-2$.

If $\Sigma$ is the Veronese surface, then 
$$\iota_*\O_{\mathbb{P}^2}(2)=\O_{\Sigma}(1)=\varphi_*(K_X+F)=\iota_*\O_{\mathbb{P}^2}(-3)+F',$$
so the fibration is obtained by   blowing up  the base points of a pencil  $\Lambda$  of plane quintics on $\mathbb{P}^2.$   Since $g=6$  the general member of this pencil must be nonsingular and thus the base points are simple (possibly infinitely near).

If $\Sigma$ is a rational normal  scroll, there exists $\xymatrix{\Psi:\mathbb{F}_n\ar@{^(->}[r]^{|\Delta+k\Gamma|} &  \Sigma\subset\mathbb{P}^{g-1}}$ for some $n>0$ and $k>n$, where $\Delta$ is the section such that $\Delta^2=-n$,   $\Gamma$ is a fibre of the structural morphism of $\mathbb{F}_n$ and

   $$ g+n=2(k+1).$$

On the other hand, taking into account that $K_{\mathbb{F}_n}=-2\Delta-(n+2)\Gamma$ and that $\Psi$ is a birational morphism, we obtain
$$\Psi_*(\Delta+k\Gamma)=\O_{\Sigma}(1)=\varphi_*(K_X+F)=\Psi_*(-2\Delta-(n+2)\Gamma)+F'.$$

Then $F'\equiv 3\Delta +(k+n+2)\Gamma$ and $F'.\Gamma =3$.  So the fibration is the blow up of the base points of a pencil of curves  $\Lambda$  in $|3\Delta +(k+n+2)\Gamma|$.  Since a general curve in this linear system has genus $g$,  the general member of the pencil must be nonsingular and thus the base points are simple (possibly infinitely near).

Since $F'$ is irreducible, $\frac{g+n}{2}+n+1\geq 3n$, so $g+2\geq 3n.$

Finally, assume that $\Sigma $  is a cone on a rational normal curve.  Since, by Proposition \ref{adjoint1},  the only curves contracted by $\varphi$ are $(-1)$ or $(-2)$-curves, the vertex of the cone must be a $(-2)$-curve. So this possibility can only occur if $g=4$.   In that case, it is easy to see that we have case iii).  
\end{proof}

Finally we are in a position to prove the following:
\begin{proposicion}\label{g11} Let $X$ be a  rational nonsingular  projective surface  and  $f: X \to \mathbb{P}^1$ be a semistable non isotrivial  fibration of genus $g\ge 4$  having exactly 5 singular fibers.  Then, if $K_X^2=2-3g$,     $g\leq 11$. 

Furthermore, if the image $\Sigma$ of $X$ by the morphism $\varphi$ defined by $|K_X+F|$ is $\mathbb F_n$ or the quadric cone, the fibration $f$ is not strongly stable and, if $g\geq 7$, it is also not stable. Finally,  the base points of the pencil  $\Lambda$  in $\Sigma$  corresponding to the fibration $f$  are not in a general position.
\end{proposicion}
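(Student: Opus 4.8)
The plan is to exploit the explicit description of the surface $\Sigma$ (either $\mathbb F_n$, the Veronese, or the quadric cone) together with the counting results of Section 3, applied to a second fibration. Concretely: when $\Sigma=\mathbb F_n$ (or the quadric cone, which behaves like $\mathbb F_2$ with a contracted $(-2)$-curve), the ruling $\Gamma$ of $\Sigma$ pulls back, via the birational morphism $\varphi$ of Proposition \ref{caseg-2}, to a genus-$0$ fibration $\gamma:X\to\mathbb P^1$. By Proposition \ref{caseg-2}, $F'\equiv 3\Delta+(k+n+2)\Gamma$ with $F'\cdot\Gamma=3$, so $F\cdot\Gamma=3$, i.e. we are in the situation of Lemma \ref{DoubleFibration} with $m=3$. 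Thus $-\sum_{ij}(g_{ij}-1)\le 15+c$, where $c=l_{-1}+l_{-2}+l_{-3}$ counts curves common to fibers of $f$ and of $\gamma$ (noting, by Lemma \ref{DoubleFibration}(i)--(ii), that components of fibers of $\gamma$ have self-intersection $\ge -3$, and a fiber of $\gamma$ carries at most one $(-3)$-curve, which must also be a component of a fiber of $f$). On the other hand Lemma \ref{gij}(i) gives $-\sum_{ij}(g_{ij}-1)=2g+11$. Combining, $2g+11\le 15+c$, so $c\ge 2g-4$. Since $f$ has only $5$ singular fibers, all these $c$ curves lie in $F_1,\dots,F_5$, which forces these fibers to be highly reducible — and, since the curves are $(-1)$, $(-2)$ or $(-3)$-curves, the $(-1)$-ones are excluded (fibers of $f$ contain no $(-1)$-curve by semistability), so $c=l_{-2}+l_{-3}$, with each fiber of $\gamma$ contributing at most one $(-3)$-curve, hence $l_{-3}\le 5$ and so $l_{-2}\ge 2g-9$.

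Next I would feed this back into Lemma \ref{l lt 4}: with $K_X^2=2-3g$ we have $4(g-1)+3l'+\tfrac{3r}{2}\le 64$, where $l'\ge l_{-2}$ is the total number of $(-2)$-curves in fibers of $f$ and $r\ge 1$ whenever $l'>0$. Plugging $l'\ge 2g-9$ (and $r\ge 1$) yields $4(g-1)+3(2g-9)+\tfrac32\le 64$, i.e. $10g\le 64+4+27-\tfrac32+ \dots$, a linear bound that forces $g\le 11$; I expect the arithmetic to close off exactly at $g=11$ (and one should double-check the borderline cases $g=10,11$ separately, possibly also using the bound $l_{-3}\le 5$ more carefully or the finer inequality \eqref{eq-ttz2}). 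For the quadric-cone case ($g=4$) the bound $g\le 11$ is automatic, and the Veronese case ($g=6$) too. So the genus bound $g\le 11$ follows. I would then record that in the scroll and cone cases $l'=l_{-2}\ge 2g-9>0$ (for $g\ge 5$), so $X^\#$ has a $(-2)$-curve in a fiber, meaning $f$ is \emph{not} strongly stable for all $g\ge 4$ in the cone case (there the vertex $(-2)$-curve is vertical for $\gamma$ but one checks it sits in a fiber of $f$ as in Proposition \ref{caseg-2}); and for $g\ge 7$ we get $l'\ge 2g-9\ge 5>0$, hence $f$ is not even stable.

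Finally, for the statement that the base points of $\Lambda$ are not in general position: if the $b$ base points (counted with infinitely near ones) of the pencil $\Lambda\subset|3\Delta+(k+n+2)\Gamma|$ were in general position, then a general fiber of $\gamma$ (the pullback of a ruling $\Gamma$) would be irreducible — its strict transform would meet the exceptional locus transversally and only at distinct points — so $\gamma$ would have no reducible fibers with components common to $f$, i.e. $c=0$, contradicting $c\ge 2g-4\ge 4$. More precisely, reducibility of fibers of $\gamma$ (equivalently, $\ge 2$ base points of $\Lambda$ on a single $\Gamma$, or infinitely near base points) is exactly what the final Remark calls "not in general position"; since we have just shown $c\ge 2g-4>0$, such configurations must occur. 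This also handles the cone case via the same pullback argument. The main obstacle I anticipate is the bookkeeping at the boundary values of $g$: making sure that the inequalities $c\ge 2g-4$, $l_{-3}\le 5$, and Lemma \ref{l lt 4} are combined tightly enough to reach $g\le 11$ rather than a weaker bound, and treating the small-genus exceptional surfaces (Veronese, cone) so that the "not strongly stable"/"not stable" and "not in general position" claims are stated only where they actually hold — which is why the Proposition restricts the stability assertions to the $\mathbb F_n$ and cone cases.
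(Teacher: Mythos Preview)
Your overall strategy---pull back the ruling of $\Sigma$ to a genus-$0$ fibration $\gamma$ with $F\cdot\Gamma=3$, combine Lemma~\ref{gij}(i) with Lemma~\ref{DoubleFibration}(iv) to get $2g+11\le 15+c$, and feed the resulting lower bound on vertical $(-2)$-curves into Lemma~\ref{l lt 4}---is exactly the paper's. But there is a genuine error in the step where you bound the $(-3)$-curves.

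You write ``each fiber of $\gamma$ contributing at most one $(-3)$-curve, hence $l_{-3}\le 5$''. Lemma~\ref{DoubleFibration}(ii) does say each fiber of $\gamma$ contains at most one $(-3)$-curve, but there is no bound of $5$ on the number of fibers of $\gamma$ that can carry such a curve; the number $5$ counts singular fibers of $f$, not of $\gamma$, and a single singular fiber $F_i$ can contain many $(-3)$-curves (each lying in a \emph{different} fiber of $\gamma$). So $l_{-3}\le 5$ is unjustified, and with it your bound $l_{-2}\ge 2g-9$ collapses. The paper bounds $l_3$ differently: a $(-3)$-curve common to $f$ and $\gamma$ maps via $\varphi$ to a ruling of $\mathbb F_n$, and to turn that ruling into a $(-3)$-curve one must blow up three base points on it; since the total number of blow-ups is $-K_X^2+K_{\mathbb F_n}^2=3g+6$, one gets $l_3\le g+2$. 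This gives only $l_2\ge g-6$, hence $g\le 6+l'$, which via Lemma~\ref{l lt 4} yields $g\le 12$, not $g\le 11$. To kill $g=12$ the paper observes that then $l'=6$ and $r=1$ are forced, i.e.\ a single $A_6$ chain, computes $r_f=e_f-7+\tfrac17=83+\tfrac17$ exactly from \eqref{rfef}, and checks this violates \eqref{MVT5} at $e=5$. Your sketch does not anticipate this extra step.

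Two smaller points. First, you conflate ``not stable'' and ``not strongly stable'': $l_2>0$ already gives \emph{not stable}; the paper's argument that $f$ is never strongly stable is the simpler observation that $l_2+l_3=0$ would force $2g+11\le 15$, impossible for $g\ge 4$. Second, for the Veronese case ($g=6$) the bound $g\le 11$ is indeed automatic, but the ``base points not in general position'' claim still needs an argument there; the paper uses the pencil of lines through a base point (so $m=4$), notes $l_3=0$ since the Veronese contains no lines, and concludes $l_2+l_4\ge 2$.
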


\begin{proof} 1) Suppose that the image of $\varphi$  is $\mathbb F_n$, as in  ii) of Proposition \ref{caseg-2}.
 
Abusing notation, we denote also by $\Gamma$ the general fiber of the fibration $\gamma$ in $X$ coming from the ruling of  $\mathbb{F}_n$.
Then  $F.\Gamma=3$. Combining Lemmas \ref{gij}, i) and \ref{DoubleFibration}, iv), we obtain:

\begin{equation}\nonumber\label{bound-2-3curves}
    2g+11= -\sum_{ij} (g_{ij} - 1)\le 15 +c.
\end{equation}

Note that, being $f$ relatively minimal, by Lemma \ref{DoubleFibration}, i), every curve common to fibers of  $f$ and $\gamma$ must be either a $(-2)$ or a $(-3)$ $f$-vertical curve. Let $l_i$ be as in Notation \ref{l}.

A $(-3)$-curve contained in a fiber of $f$ and a fiber  of $\gamma$  satisfies $(K_X+F)\cdot \theta=1$. As such, its image via $\varphi$ is a ruling of $\mathbb F_n$. Note that   one needs to blow up three times this ruling to obtain $\theta$.  

By Lemma \ref{DoubleFibration}, ii), each fiber of $\gamma$ contains at most one $(-3)$-curve. Since $K_X^2=2-3g$,   the number of blown up points  in the fibers of  $\gamma$  is  $3g+6$.    Thus  $l_3 \le g+2$. Therefore:

$$2g  + 11 = - \sum_{ij} (g_{ij}-1)\le 15+g+2 +  l_2.$$
Thus, we obtain:

\begin{equation}\label{6+l} g\le 6+ l_2 \le 6+ l', \end{equation}
($l'$ denoting, as in the previous section the total number of $f-$vertical $(-2)$-curves).
Or, equivalently, 

$$3g-18 \le 3l'.$$

Substituting  this last inequality in the inequality of Lemma \ref{l lt 4}, we obtain $g\le 12$. 

 Now, if $g=12$ we have by (\ref{6+l}) that $6\le l'$.  By  Lemma \ref{l lt 4}, we conclude that $l'=6$ and $r=1$, i.e., there is a unique chain of vertical $(-2)$-curves of length exactly $6$.

 Thus, by equation (\ref{rfef}) in Section 3, 

 \begin{align*}
     r_f &= e_f-7+\frac{1}{7} \\ &= 7g+6 -7+\frac{1}{7}\\
 &= 83 +    \frac{1}{7}.
 \end{align*}

 Substituting this value in (\ref{MVT5}) evaluated in $e=5$, we obtain a  contradiction. Hence $g\leq 11$. 
 
 \smallskip

Finally, note that,  from what we have seen above, for $g\geq 7$,  one has  $l_2>0$ and therefore the fibration $f$ is not stable.  A $(-2)$-curve common to a fiber of $f$ and of $\gamma$ is contracted by $\varphi$. Since $\varphi$ only contracts the exceptional divisors corresponding to the base points of  the pencil $ \Lambda$,   these $(-2)$-curves  common to a fiber of $f$ and of $\gamma$  are components of exceptional divisors, and so  the pencil $\Lambda$ has infinitely near base points.

In addition, since $l_2+l_3=0$ implies  $2g+11\le 15$  and we are assuming $g\geq 4$,  we see that $f$ is never strongly stable.   Hence, for all $g$ between $4$ and $11$, the base points of the fibration are not in general position in the sense that either the fibration  has infinitely near base points or it has three base points lying in the same fiber of the ruling of $\Sigma$.
\medskip

2) If $\Sigma$ is the quadric cone then $g=4$  and $2g+11=19$. Considering  the ruling of $\Sigma$ by $\Gamma$,  we obtain again  $l_2+l_3\geq 0$ and, as above, we see that $f$ is never strongly stable and the base points of the pencil giving rise to the fibration $f$ are not in a general position in the same sense as above. 

\medskip

3) If $\Sigma $ is the Veronese surface, then  $g=6$  and on $X$ we have genus $0$ fibrations determined by the linear system  corresponding to the lines in $\mathbb P^2$ through a base point  of the pencil $\Lambda$ in $\mathbb P^2$ corresponding to $f$. Fix one of these and denote it by $\gamma$. In this case $F.\Gamma=4$ and using Lemmas \ref{gij}
  and \ref{DoubleFibration} we obtain:

  $$ 22 \le 20 +l_2+l_3+l_4.$$
  
  Now remark that $l_3$ must be 0. In fact, a $(-3)$-curve $\theta$ contained in a fiber of $f$ satisfies $(K_X+F)\cdot\theta=1$, and as such  it is mapped  to a line via $\varphi$,   whilst the Veronese surface contains no lines.  
  
  So $l_2+l_4\geq 2$. As before, $l_2\neq 0$ means that the pencil $\Lambda$  of quintic curves in $\mathbb P^2$ giving raise to $f$ has infinitely near base points. On the other hand,  $l_4\neq 0$  means that one of the quintics contains a line and  that on this line there are 5 base points of $\Lambda$ (maybe infinitely near). So, anyway, the base points  are not in general position  (5 may be on the same line  in $\mathbb P^2$ or some  may be infinitely near).

\end{proof}

Now we turn to the case $K_X^2=3-3g$.  Recall that  $K_X^2=3-3g$ means $(K_X+F)^2=g-1$. 

\medskip
The contents of the  next Proposition are already in \cite {konno}, as explained in the following  proof.

\medskip

\begin{proposicion}\cite{konno}\label{caseg-1}  Let $f:X \to \mathbb P ^1$ be a
 relatively minimal
fibration on the  rational nonsingular projective surface $X$ with general   non-hyperelliptic fiber of genus 
 $g\ge 4$ such that $(K+F)^2=g-1$. 
Let $\xymatrix{\varphi:X\ar@{-->}[r] &  \Sigma\subset\mathbb{P}^{g-1}}$
be the map defined by $|K_X+F|$. Then $\varphi$ is a birational morphism.

 If $\rho: Y\to \Sigma$ is  the minimal desingularization $Y$ of $\Sigma$, then   $\Sigma$ is the image of $Y$ by the morphism defined by $|-K_Y|$. 
 
  Furthermore, $Y$ is a (possibly weak) Del Pezzo surface, i.e.,   
 
 \begin{itemize} 
 
  \item   [i)] $Y$ is  $\mathbb P^2$ blown up in $10-g$ points in weakly general position  or
  
 \item   [ii)] $g=9$,  $\Sigma$ is  the Veronese image of a quadric in $\mathbb P^3$ ($Y=\mathbb F_0$ or $Y=\mathbb F_2$).  \end{itemize}

In addition,    $g\leq 10$ and the fibration is obtained by blowing up the simple  (but possibly infinitely near) base points of a pencil of curves with general  nonsingular member in the linear system $|-2K_Y|$, i.e.,  in case i), of a pencil of of plane degree $6$ curves, whose general element admits only  $10-g$  singularities of order $2$,
and, in case ii),  a pencil    with general nonsingular member  of quartic hypersurface sections  of a quadric in $\mathbb P^3$. 

\end{proposicion}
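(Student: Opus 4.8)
The plan is to reconstruct, step by step, the argument given in \cite{konno}. Since $g\ge 4$ we have $(K_X+F)^2=g-1\le 2g-5$, so Proposition \ref{adjoint1}, vi), already yields that $\varphi$ is a birational \emph{morphism}; in particular $|K_X+F|$ is base-point-free and $\Sigma\subset\mathbb P^{g-1}$ is a nondegenerate surface of degree $(K_X+F)^2=g-1$. First I would record two facts about $\Sigma$. It is linearly normal, because the composite $H^0(\mathbb P^{g-1},\O(1))\to H^0(\Sigma,\O_\Sigma(1))\to H^0(X,K_X+F)$ is the canonical identification, so all three spaces have dimension $g$. And its sectional genus is $1$: a general hyperplane section of $\Sigma$ is the isomorphic image under $\varphi$ of a general $C\in|K_X+F|$ (such a $C$ is disjoint from every curve $D$ contracted by $\varphi$, since $C\cdot D=(K_X+F)\cdot D=0$), and $2p_a(C)-2=(K_X+F)\cdot(2K_X+F)=0$ because $K_X^2=3-3g$. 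By the classical classification of surfaces of sectional genus one, $\Sigma$ is then either a cone over an elliptic normal curve or an anticanonically polarized Del Pezzo surface with at most rational double points; since $X$, hence $\Sigma$, is rational, only the latter survives. Thus $\O_\Sigma(1)=\omega_\Sigma^{-1}$, and the singularities of $\Sigma$ are rational double points, obtained by contracting the chains of $(-2)$-curves contained in fibres of $f$ --- these being, together with the $(-1)$-curves meeting $F$ once, the only curves $\varphi$ contracts, by Proposition \ref{adjoint1}, v).

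Next I would pass to the minimal resolution $\rho:Y\to\Sigma$. Since $X$ is smooth and $\Sigma$ is normal, $\varphi$ factors as $\varphi=\rho\circ\pi$ with $\pi:X\to Y$ a birational morphism of smooth surfaces (a composition of blow-downs), and $\rho$, being the minimal resolution of rational double points, is crepant: $K_Y=\rho^{*}K_\Sigma$. Then $-K_Y=\rho^{*}\O_\Sigma(1)$ is nef and big with $(-K_Y)^2=\deg\Sigma=g-1$, and nefness of $-K_Y$ forces every irreducible curve on $Y$ to have self-intersection $\ge-2$; hence $Y$ is a weak Del Pezzo surface of degree $g-1$, and $\Sigma$ is the image of the morphism defined by the complete linear system $|-K_Y|$ (complete since $h^0(-K_Y)=h^0(\O_\Sigma(1))=g$). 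In particular $g-1=K_Y^2\le 9$, so $g\le 10$; and the classification of weak Del Pezzo surfaces of degree $d=g-1$ gives exactly the two alternatives of the statement: either $Y=\mathbb P^2$ blown up at $9-d=10-g$ points in weakly general position (case i); here $Y=\mathbb F_1$ when $g=9$), or $d=8$, $g=9$ and $Y=\mathbb F_0$ or $\mathbb F_2$ (case ii).

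To describe the fibration I would combine $K_X+F=\varphi^{*}\O_\Sigma(1)=\pi^{*}(-K_Y)$ with $K_X=\pi^{*}K_Y+\mathcal E$, where $\mathcal E\ge 0$ is $\pi$-exceptional with full exceptional support; subtracting gives $F=\pi^{*}(-2K_Y)-\mathcal E$. Hence the base-point-free pencil $|F|$ on $X$ is the proper transform of a pencil $\Lambda\subseteq|-2K_Y|$, and $\pi:X\to Y$ is the blow-up of the base points of $\Lambda$; these are simple (but possibly infinitely near), and their number equals $(-2K_Y)^2=4(g-1)$, consistently with $K_X^2=K_Y^2-4(g-1)=3-3g$. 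The general member $M=\pi_{*}F$ of $\Lambda$ is the birational image of the smooth curve $F$, so its geometric genus is $g$; since also $p_a(M)=1+K_Y^2=g$, the curve $M$ is nonsingular. It remains to identify $|-2K_Y|$ on the two models: for $Y=\mathrm{Bl}_{10-g}\mathbb P^2$, $-2K_Y$ is the proper transform of $|\O_{\mathbb P^2}(6)|$ with a double point imposed at each of the $10-g$ centres, so $\Lambda$ corresponds to a pencil of plane sextics with exactly $10-g$ singularities of order two (case i); for $Y=\mathbb F_0$ (resp. $\mathbb F_2$), $-K_Y$ is the hyperplane class of the anticanonical model, so the members of $|-2K_Y|$ are the quartic hypersurface sections of the smooth quadric (resp. the quadric cone) in $\mathbb P^3$ (case ii).

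The genuinely non-formal input is the classification used in the first paragraph --- that a linearly normal surface of sectional genus one is an anticanonical Del Pezzo surface with at worst rational double points, or a cone over an elliptic normal curve --- together with Proposition \ref{adjoint1}, v), which identifies the curves $\varphi$ contracts (so that passing to the minimal resolution $Y$ only blows down $(-1)$-curves on the smooth surface $X$), and the remark that a chain of $(-2)$-curves inside a semistable fibre is of type $A_n$ and hence contracts to an $A_n$ singularity. Everything beyond that point --- the factorization through the minimal resolution, the identification of $Y$ as a weak Del Pezzo of degree $g-1$, and the translation of $|-2K_Y|$ into plane sextics or quartic sections of a quadric --- is intersection-number bookkeeping, exactly as in \cite{konno}.
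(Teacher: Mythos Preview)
Your proof is correct and follows essentially the same route as the paper's. The only cosmetic difference is that the paper invokes Nagata's classification of surfaces of degree $d$ in $\mathbb P^d$ directly, whereas you first compute the sectional genus of $\Sigma$ to be $1$ and then appeal to the equivalent classification of surfaces of sectional genus one; likewise, you carry out explicitly the intersection-number bookkeeping (the identity $F=\pi^*(-2K_Y)-\mathcal E$, the count $(-2K_Y)^2=4(g-1)$ of base points, and the smoothness of the general $M\in\Lambda$ via $p_a(M)=1+K_Y^2=g$) where the paper simply defers to \cite{konno}.
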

\begin{proof}  Since $(K_X+F)^2=g-1$,   by Proposition \ref{adjoint1}, vi), the map defined by the linear system $|K_X+F|$ is a birational morphism. 

Since $|K_X+F|$ is a complete linear system, by the classification of surfaces of degree $d$ in $\mathbb P^d$ in \cite{Nagata}, $\Sigma$ and its minimal resolution $Y$ are as stated and $g\leq 10$  (cf. also, e.g., \cite{Dolgachev}, Chp. 8, also the  proof of  (1)  of Proposition 2.2 of  \cite {konno}) .

We have  a morphism $\phi:X\to Y$ such that    $\varphi= \rho\circ \phi$. 
 The proofs of  (1)  of Proposition 2.2   and  of  Lemma 1.2 of \cite {konno} show  that, under our hypothesis,     the fibration $f$ is obtained  by blowing up the simple (but possibly infinitely near) base points of a pencil  of curves  $|G|$  in $|-2K_Y|$, such that $\phi_*(F)=G$ (and $\phi^*(K_Y+G)=K_X+F)$).   
 
If $\Sigma$ is not the singular quadric in $\mathbb P^3$, we have the description in the statement. If $\Sigma$ is the singular quadric in $\mathbb P^3$, it suffices to notice that $|-K_Y|=\rho^*|4H|$ where $H$ is an hyperplane of $\mathbb P^3$.

\end{proof}

 \bigskip
 
 \begin{proposicion}\label{g10} Let $X$ be a  rational nonsingular  projective surface  and  $f: X \to \mathbb{P}^1$ be a semistable non isotrivial  fibration of genus $g\ge 4$  having exactly 5 singular fibers.  Then, if $K_X^2=3-3g$ and $g\geq 6$,  the base points of the pencil  of sextic curves in $\mathbb P^2$  or, in the case of the quadric,   of the pencil in $|4H|$  giving rise to the fibration $f$ are never in a general position.  Furthermore the quadric is nonsingular.
 
 \end{proposicion}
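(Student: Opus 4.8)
The strategy is to describe $f$, via Proposition~\ref{caseg-1}, through a factorisation
\[
\varphi=\rho\circ\phi\colon X\xrightarrow{\ \phi\ }Y\xrightarrow{\ \rho\ }\Sigma,
\]
where $Y$ is a (possibly weak) Del Pezzo surface with $K_Y^2=g-1$, $\rho$ is its anticanonical morphism, and $\phi$ is the blow-up of the $4(g-1)$ simple base points of a pencil $|G|\subset|-2K_Y|$, with $\phi_*F=G$ and $\phi^*(-K_Y)=K_X+F$. I would then exhibit on $X$ a genus zero fibration $\gamma$ and combine Lemma~\ref{DoubleFibration}, iv) with Lemma~\ref{gij}, ii) to force a curve $\theta$ common to a fibre of $f$ and a fibre of $\gamma$; the image $\bar\theta:=\phi(\theta)\subset Y$ will then pin down the degeneracy of the base points.

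I would first settle the singularity of the quadric, which can only fail when $Y=\mathbb F_2$ (a case arising for $g=9$). For the $(-2)$-section $\Delta$ of $\mathbb F_2$ one has $G\cdot\Delta=(-2K_Y)\cdot\Delta=0$, so a general member of $|G|$ is disjoint from $\Delta$; hence no base point of $|G|$ lies on $\Delta$, and $\Delta$ lifts to a $(-2)$-curve $\widetilde\Delta\subset X$ which, since $\rho$ contracts $\Delta$, is contracted by $\varphi$. By Proposition~\ref{adjoint1}, v), $\widetilde\Delta$ is then a $(-2)$-curve in a fibre of $f$, so $f$ is not stable, contradicting Lemma~\ref{stable}, i) ($g=9$). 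Hence in the quadric case $Y=\mathbb F_0$. Moreover, in the plane case, a $(-2)$-curve of $Y$ (with $10-g\le 4$) has class $E_i-E_j$ or $H-E_i-E_j-E_k$, i.e.\ two of the prescribed double points of the sextic pencil are infinitely near, or three of them are collinear; in that case there is nothing to prove. Likewise, if $|G|$ has an infinitely near base point the pencil of sextics (resp.\ of $|4H|$-sections) has one too, and we are done. So I may assume that $Y$ is $\mathbb P^2$, or $\mathbb P^2$ blown up at $10-g\ge 1$ points in general position, or $\mathbb F_0$, and that $\phi$ is the blow-up of $4(g-1)$ distinct points of $Y$.

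Next I would choose $\gamma$. If $10-g\ge 1$, let $\gamma_Y:=|H-E_1|$ be the conic bundle whose fibres are the strict transforms of the lines through the first blown-up point; if $Y=\mathbb F_0$, let $\gamma_Y$ be one of its two rulings; if $Y=\mathbb P^2$ (so $g=10$), let $\gamma_Y$ be the pencil of lines through a base point $p_1$ of $|G|$. In each case $\gamma:=\gamma_Y\circ\phi$ is a genus zero fibration on $X$ (a morphism in the last case because $\phi$ blows up $p_1$), with general fibre $\Gamma$ and $m:=F\cdot\Gamma$ equal to $G\cdot(\text{general fibre of }\gamma_Y)=4$ in the first two cases and to $6-1=5$ in the last (there $\Gamma$ being the strict transform of a general line through $p_1$). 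Then Lemma~\ref{DoubleFibration}, iv) reads $-\sum_{ij}(g_{ij}-1)\le 5m+c$, with $c$ the number of curves common to fibres of $f$ and $\gamma$, while Lemma~\ref{gij}, ii) gives $-\sum_{ij}(g_{ij}-1)=2g+10$; hence $c\ge 2g+10-5m$, which is $2g-10\ge 2$ for $g\le 9$ (where $m=4$) and is $5$ for $g=10$ (where $m=5$). In particular a common curve $\theta$ exists.

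Finally I would locate $\theta$. It is a nonsingular rational curve (Lemma~\ref{DoubleFibration}, i)); being a component of a fibre of $f$ --- necessarily a proper one, since fibres of $f$ have arithmetic genus $g\ge 6$ --- it has $F\cdot\theta=0$, and, $f$ being semistable, $\theta^2\le -2$, while lying in a fibre of $\gamma$ forces $\theta^2\ge -m$. Since the exceptional divisor of $\phi$ is a disjoint union of $(-1)$-curves, $\phi$ cannot contract $\theta$, so $\bar\theta:=\phi(\theta)$ is a curve in a fibre of $\gamma_Y$; it is not a general fibre, for a general fibre avoids the base points of $|G|$, so its strict transform $\theta'$ would satisfy $F\cdot\theta'=G\cdot(\text{general fibre})=m>0$. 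Thus $\bar\theta$ is a component of a reducible fibre of $\gamma_Y$, or a special irreducible fibre meeting the base locus of $|G|$; in either case $\bar\theta$ is a smooth rational curve with $-K_Y\cdot\bar\theta\in\{1,2,3\}$ (the value $0$ being excluded since $Y$ has no $(-2)$-curve), and from $0=F\cdot\theta=G\cdot\bar\theta-\#(\text{base points of }|G|\text{ on }\bar\theta)$ exactly $-2K_Y\cdot\bar\theta$ base points of $|G|$ lie on $\bar\theta$. Unwinding this: a $(-1)$-curve of $Y$ is either an exceptional $E_i$, in which case two base points of the sextic pencil are infinitely near the $i$-th double point, or a class $H-E_i-E_j$, in which case at least four base points of the sextic pencil (among them $q_i,q_j$) lie on the line $\overline{q_iq_j}$; an irreducible fibre of $|H-E_1|$ is the strict transform of a line $\ell$ through $q_1$ carrying four further base points of $|G|$, so $\ell$ carries at least five base points of the sextic pencil (counting those infinitely near $q_1$); when $g=10$, $\bar\theta$ is a line of $\mathbb P^2$ carrying six base points of $|G|$; and when $Y=\mathbb F_0$, $\bar\theta$ is a fibre of a ruling carrying four base points of $|G|$, i.e.\ four base points of the pencil of $|4H|$-sections lying in a single fibre of the ruling. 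In every case the base points fail to be in general position, in the sense of the Remark. The hard part, which I expect to cost the most work, is precisely this last classification --- determining exactly which curves of $Y$ can be $\bar\theta=\phi(\theta)$, and in particular ruling out a moving conic through several base points of $|G|$; this is why $\gamma_Y$ must be chosen as the conic bundle $|H-E_1|$, whose reducible fibres have $(-1)$-curve components and whose irreducible fibres are honest lines through a prescribed double point, rather than, say, the pencil of conics through all of the blown-up points.
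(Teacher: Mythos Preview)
Your proof is correct and follows essentially the same approach as the paper: choose the auxiliary genus-zero fibration $\gamma$ with $F\cdot\Gamma=4$ (or $5$ when $g=10$), combine Lemma~\ref{gij}\,ii) with Lemma~\ref{DoubleFibration}\,iv) to force common components, and use Lemma~\ref{stable} to exclude the singular quadric. The only organisational difference is that you first reduce to the case where $Y$ has no $(-2)$-curves and $\phi$ has no infinitely near centres, which lets you classify $\bar\theta$ explicitly, whereas the paper argues directly with the counts $l_2,l_3,l_4$ and, for $g=9,10$, pushes a bit further to show $l_3=0$ (since no lines lie on the relevant anticanonical images), yielding the sharper bounds $l_4\ge 8$ and $l_5\ge 5$ which you do not need for the bare statement.
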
 
 
  \begin{proof}  
   If  $g\leq 9$,  we  find on $X$ a fibration  $\gamma$, as in Lemma \ref{DoubleFibration},  such that $\Gamma\cdot F=4$, by taking, in the case of the quadric $Q$, the pencil corresponding to a ruling of $Q$ 
 and, in the case of the  image of $\mathbb P^2$, the pencil corresponding to the lines  in $\mathbb P^2$ passing through one of the double points of the pencil of  sextic curves.  We will, as before, abuse notation and also denote by $\gamma$ the corresponding fibration on $X$.

 As before, combining Lemmas \ref{gij}, i) and \ref{DoubleFibration}, iii), we obtain:
$$ 2g +10= - \sum_{ij} (g_{ij} - 1)\le 20 +c.$$

We have  then:
\begin{equation}\label{c}
   l_2+l_3+l_4\geq 2g-10.
\end{equation}
\medskip 

So for  $g\geq 6$,   $l_2+l_3+l_4>0$. Note that  $l_2+l_3+l_4>0$ implies that the base points of the pencil of sextics giving rise to the fibration are not in general position (either infinitely near or more than 3 on a line).

  \medskip 
  
 For $g=9$,  remark first that, by Lemma \ref{stable},    $X$ has no $(-2)$-curves.  So  the quadric is nonsingular and $l_2=0$, implying  $l_3+l_4\geq 8$. 
 
  Since a $(-3)$-curve contained in a fiber of $f$  satisfies $(K_X+F)\cdot \theta=1$,  its image via $\varphi$ is a line.  The Veronese embedding in $\mathbb P^8$ of a quadric contains no lines. On the other hand,  the image in $\mathbb P^8$ of $\mathbb P^2$ blown up in one point $P$ contains only the line corresponding to the exceptional divisor. So, by our choice of $\gamma$, any $(-3)$-curve contained in a fiber of $f$ is not a component of a fiber of $\gamma$ in $X$.
  
  So $l_3$ must be $0$.  Thus,  by  (\ref{c}),  $l_4\geq 8$.  Note that, to obtain  from a ruling $r$ of $Q$  a $( -4)$-curve, we need to do  blow up four points on $r$, and to obtain from a line $l$ in the plane  a $(-4)$-curve, we need to blow up $5$ points on $l$.
 
 \medskip
 
If $g=10$,  take for $\gamma$  the pencil coming from  the lines in  $\mathbb P^2$ through one of the base points of the pencil of sextics. In that case, $\Gamma\cdot F=5$ and we obtain 
 
 $$2g+10=30= -\sum_{ij} (g_{ij} - 1)\le 25 +c.$$

Hence $l_2+l_3+l_4+l_5\geq 5$. By  Lemma \ref{stable},  $l_2=0$ and, since the Del Pezzo surface of degree 9 does not contain either lines or conics, we have $l_3=l_4=0$. Note that to obtain a $(-5)$-curve we need to blow-up 6 points  on a line.

 \end{proof}

\medskip
\begin{remark} {\rm   In the case $g=10$ and $K^2=3-3g$, it is not difficult to see with the techniques above that all the sextics giving rise to singular fibers would be completely reducible (i.e., the union of  lines). By  \cite{pereira}, a fibration can have at most five completely reducible fibers and there is no known example attaining this bound.  
We were not able to prove or disprove the existence of this example. }

\end{remark}

\vskip2truecm

\begin{minipage}{13.0cm}
\parbox[t]{6.5cm}{Margarita Casta\~ neda-Salazar\\
Instituto Educativo Ammadeus A.C.\\
Vialidad Vetagrande 468. Zona conurbada \\
C.P. 98600. Guadalupe, Zacatecas\\
 Mexico\\
mateliceo0@gmail.com
 } \hfill
\parbox[t]{5.5cm}{Margarida Mendes Lopes\\
Departamento de  Matem\'atica\\
Instituto Superior T\'ecnico\\
Universidade de Lisboa\\
Av.~Rovisco Pais\\
1049-001 Lisboa, Portugal\\
mmendeslopes@tecnico.ulisboa.pt\\
orcid: 0000-0002-8921-291X
}

\vskip1.0truecm

\parbox[t]{7.0cm}{Alexis Zamora\\
Universidad Aut\'onoma de Zacatecas\\
Unidad Acad\'emica de Matem\'aticas\\
Av. Solidaridad y Paseo de la Bufa\\
C.P. 98000, Zacatecas, Zac\\
 Mexico\\
 alexiszamora@uaz.edu.mx}
\end{minipage}

\end{document}